\newtheorem{thm}{Theorem}[section]
\newtheorem{lem}[thm]{Lemma}
\newtheorem{prop}[thm]{Proposition}
\theoremstyle{definition}
\newtheorem{defn}[thm]{Definition}
\theoremstyle{remark}
\newtheorem{rem}[thm]{Remark}
\numberwithin{equation}{section}
\def \<{\langle}
\def \>{\rangle}
\newcommand{\R}{\mathbb{R}}
\newcommand{\N}{\mathbb{N}}
\begin{document}

\title{A Maximum Principle for the controlled Sweeping Process}

\author[Chems Eddine Arroud]{Chems Eddine Arroud}
\address[Chems Eddine Arroud]{Department of Mathematics, Jijel University, Jijel, Algeria, and Mila University Center, Mila, Algeria}
\email{arroud.math@gmail.com}

\author[Giovanni Colombo]{Giovanni Colombo}
\address[Giovanni Colombo]{Universit\`a di Padova, Dipartimento di Matematica and I.N.d.A.M research unit, via Trieste 63, 35121 Padova, Italy}
\email{colombo@math.unipd.it}

\thanks{This work was done while the first author was visiting the Department of Mathematics of Padova University, funded by 
\textit{Programme Boursier ``PNE''
du Minist\`ere de l'Einsegnement Sup\'erieur et de la Recherche Scientifique, R\'epublique Alg\'erienne}. The second author is partially supported by
Padova University Research Project PRAT 2015 ``Control of dynamics with active constraints''.}

\keywords{Mayer problem, adjoint equation, Pontryagin Maximum Principle, Moreau-Yosida approximation.}

\subjclass[2010]{49J15, 34G25, 49J52}

\date{\today}
\begin{abstract}
We consider the free endpoint Mayer problem for a controlled Moreau process, the control acting as a perturbation of the dynamics driven by the
normal cone, and derive
necessary optimality conditions of Pontryagin's Maximum Principle type. The results are also discussed through an example.
We combine techniques from \cite{ML} and from \cite{BPK}, which in particular deals with a different but related control problem.
Our assumptions include the smoothness of the boundary of the moving set $C(t)$, but, differently from \cite{BPK}, 
do not require strict convexity.
Rather, a kind of inward/outward pointing condition is assumed on the reference optimal trajectory at the times where the boundary of $C(t)$ is touched.
The state space is finite dimensional.
\end{abstract}
\maketitle
\section{Introduction}\label{intro}
Moreau's sweeping process appears as a model in several contexts and is being studied from the theoretical viewpoint since the early '70s
of last Century. The main subject of investigation continues to be the existence of solutions, under increasing degrees of generality. 

Essentially, the sweeping process is an evolution differential inclusion, which models the displacement of a point subject to be 
dragged by a moving set in a direction normal to its boundary. Formally, the (perturbed) sweeping process is the differential inclusion
\begin{equation}\label{SPintro}
\dot{x}(t)\in - N_{C(t)} (x(t)) + f(x(t)) ,\quad t\in [0,T]
\end{equation}
coupled with the initial condition
\begin{equation}\label{ICintro}
x(0)=x_0\in C(0), 
\end{equation}
where $C(t)$ is a closed moving set, with normal cone $N_{C(t)}(x)$ at $x\in C(t)$,
and the space variable belongs to a Hilbert space (to $\R^n$ in the present paper). If $C(t)$ is convex,
or mildly non-convex (i.e., uniformly prox-regular), and is Lipschitz as a set-valued map depending on the time $t$, and the perturbation
$f$ is Lipschitz, then it is well known that the Cauchy problem \eqref{SPintro}, 
\eqref{ICintro} admits one and only one Lipschitz solution (see, e.g., \cite{thib}).
Observe that the state constraint $x(t)\in C(t)$ for all $t\in [0,T]$ is built in the dynamics, being $N_{C(t)}(x)$
empty if $x\not\in C(t)$.

The present paper deals with the problem of determining necessary conditions for global minimizers of a final cost $h(x(T))$,
subject to the finite dimensional controlled sweeping dynamics
\begin{equation}\label{intro:CP}
\dot{x}(t)\in - N_{C(t)} (x(t)) + f(x(t), u(t)),\quad x(0)=x_0\in C(0),\quad u(t)\in U ,\quad t\in [0,T],
\end{equation}
$U$ being the control set and $f$ being smooth. 
Given a global minimizer, we prove that for a suitable adjoint vector, which is a BV function that satisfies a natural
ODE in the sense of distributions together with the usual final time transversality condition,
a version of Pontryagin's Maximum Principle holds. To keep unnecessary technicalities at a minimum, we do not add (further) state
constraints to the final point. Our main assumptions are smoothness of the 
boundary of the moving set $C(t)$ and, more importantly, a kind of outward/inward pointing condition on 
$f(x_\ast(t),u_\ast(t))$ at times $t$ where the optimal trajectory $x_\ast(t)$ belongs to the boundary of $C(t)$ (see \eqref{m1}
or \eqref{m2} below). This strong requirement is assumed in order to handle the discontinuity of the gradient of the distance $d_{C(t)}(\cdot)$
to the set $C(t)$ at boundary points. In fact, the main difficulty to be overcome in the study of necessary conditions for optimal control problems
subject to \eqref{intro:CP} is the severe lack of Lipschitz continuity of the normal cone mapping at boundary points of $C(t)$. The outward/inward
pointing condition on $f$ indeed permits to confine this issue at a negligible time set.

Control problems driven by a dynamics which involves the sweeping process appeared rather recently. Not mentioning some works scattered
in the mechanical engineering literature, some early theoretical results 
appeared in \cite{serea1} 
(a Hamilton-Jacobi characterization of the value function, with $C$ constant, later generalized in \cite{GCMP}) and in \cite{rindl1,rindl2} 
(existence and discrete approximation of optimal controls,
in the related framework of rate independent processes). More recently, the papers \cite{CHHM,CHHM1,CHHM2} are devoted to
the case where the control acts on the moving set,
which in turn is required to have a polyhedral structure. In particular, \cite{CHHM2} contains a set of necessary conditions for local minima 
which are derived by 
passing to the limit along suitable discrete approximations. Some partial results on necessary conditions for an optimal control
problem acting on the perturbation $f$ were obtained in \cite{SSER}, while the first complete achievement of this type appeared in \cite{BPK}.
The present paper owes to \cite{BPK} several ideas. The problem studied in \cite{BPK} involves a controlled ODE, coupled with a sweeping process with
a constant moving set $C$, and an adjoint equation together with Pontryagin's Maximum Principle are derived by passing to the limit
along suitable Moreau-Yosida approximations. The set $C$ is required to be both smooth and uniformly convex. The dynamics considered
in \cite{BPK} is different from \eqref{intro:CP}, but the main difficulty -- namely the discontinuity of $\nabla d_C(\cdot)$ at boundary points --
is exactly the same. In \cite{BPK}, this issue is solved by imposing enough smoothness on $\partial C(t)$ and via a smooth extension
of $d_C$ up to the interior of $C(t)$. This method, however, requires the uniform convexity of $C(t)$, which in turn yields the coercivity
of the Hessian of the (modified) squared distance. This last property is important to obtain a uniform $L^1$ bound on a sequence
of approximate adjoint vectors, which provides compactness in the space of $BV$ functions of the time variable. 
Our contribution is in modifying the method developed
in \cite{BPK} -- through seemingly simpler estimates on the distance $d_C$ based on \cite{ML} -- in order to drop the requirement of uniform convexity. The
price to pay is the inward/outward pointing assumption \eqref{m1} or \eqref{m2}. A simple example permits to test our necessary
conditions.

The recent results
contained in \cite{caom1,caom2} are also worth being mentioned. In such papers the control acts both on the moving set, which is required
to be polyhedral, and on the perturbation $f$. The problem studied is on one hand more general,
on the other the method requires some extra regularity assumptions on the optimal trajectory. Also, in contrast with our approach,
the moving set is allowed to be nonsmooth, but its generality is weakened by the requirement to be a polyhedron.
This happens because of the need of computing explicitly the coderivative of the normal cone mapping.
Indeed, the method used in \cite{caom1,caom2} is \textit{completely different} from the one adopted in the present paper,
as it relies on passing to the limit along a suitable sequence of discrete approximations of the reference optimal trajectory. The necessary
conditions obtained in \cite{caom2} include a kind of adjoint equation, transversality conditions both at the initial and at the final point,
and nontriviality conditions, but do not include a maximum principle. General existence and relaxation results for optimal control problems
of the same nature of those investigated in \cite{caom1,caom2} appear in \cite{tol}.

Finally, let us mention that H. Sussmann devoted a lot of work to establish the Maximum Principle in high generality, including 
possibly discontinuous vector fields (see, e.g., \cite{suss}). Here we rely on the special structure of the right hand side of \eqref{intro:CP}, 
and develop an \textit{ad hoc} method.

In what follows, Section \ref{stat} contains the statement of overall assumptions and of the main result, 
while Sections \ref{Dynamic} to \ref{convergence} are devoted to the proofs. In Section \ref{Ex} an example is presented and discussed.

\section{Preliminaries}\label{prelim}
We will consider all vectors in a finite dimensional space as column vectors.

Let $C\subset \R^n$ be nonempty and closed. We denote by $d_C(x)$ the distance of $x$ from $C$, $d_C(x):=\inf \{
|y-x|:y\in C\}$, and the metric projection of $x$ onto $C$ is the set of points in $C$ which realize the infimum.
Should this set be a singleton, we denote this point by ${\rm proj}_C(x)$. Given $\rho >0$, we set 
\[
C_\rho := \{ x : d_C(x) < \rho\}.
\]
\textit{Prox-regular sets} will play an important role in the sequel. The definition was first given by Federer, 
under the name of \textit{sets with positive reach}, and later studied by several authors (see the survey paper \cite{CT}).
We give only the definition for \textit{smooth} sets, because the general case will not be relevant here. 
All definitions of tangent and normal cones may be found in \cite{CLSW}, to which we refer
for all concepts of nonsmooth analysis that will be touched within this paper.
By a smooth set we mean
a closed set $C$ in $\R^n$ whose boundary is an embedded manifold of dimension $n-1$. In this case, the tangent cone is actually
an $n-1$-dimensional vector space
and the normal cone is a half ray (or a line if $C$ has empty interior). In particular, we will consider sets whose
boundary can be described as the zero set of a smooth function (at least of class $\mathcal{C}^{1,1}$, namely of class $\mathcal{C}^1$
with Lipschitz partial derivatives), with nonvanishing gradient.
\begin{defn} \label{posreach} 
Let $C\subset \R^n$ be a closed smooth set and $\rho >0$ be given. We say that $C$ is $\rho$-prox-regular provided the inequality
\begin{equation}\label{ineqphi}
\langle \zeta,y-x\rangle\le \frac{| y-x|^2}{2\rho}
\end{equation}
holds for all $x,y\in C$, where $\zeta$ is the unit external normal to $C$ at $x\in \partial C$.
\end{defn}
In particular, every convex set is $\rho$-prox regular for every $\rho >0$ and every set with a $C^{1,1}$-boundary is $\rho$-prox regular,
where $\rho$ depends only the Lipschitz constant of the gradient of the parametrization of the boundary (see \cite[Example 64]{CT}).
In this case, the (proximal) normal cone to $C$ at $x\in C$ is the nonnegative half ray generated by the unit external normal, and
\[
v\in  N_C(x)\text{ if and only if there exists $\sigma >0$ such that }\langle v,y-x\rangle \le \sigma |y-x|^2\;\,\,\forall y\in C.
\]
Prox-regular sets enjoy several properties, including uniqueness of the metric projection and differentiability of the 
distance (in a suitable neighborhood) and normal regularity, which hold also true for convex sets, see, e.g. \cite{CT}.
We state the main properties which we are going to use in the present paper.
\begin{prop}\label{propdist}
Let $\rho >0$ be given and let $C\subset \R^n$ be $\rho$-prox-regular. Then $d_C$ is differentiable on $C_{\rho}\setminus C$, and 
\[
\nabla d_C(x)=(x-{\rm proj}_C(x))/d_C(x)\;\text{ for all }x\in C_\rho\setminus C.
\]
Moreover, $\nabla d_C$ is Lipschitz with Lipschitz constant $2$ in $C_{\frac{\rho}{2}}\setminus C$. 
Finally, ${\rm proj}_C$ is well defined and is Lipschitz with Lipschitz constant $2$ in $C_{\frac{\rho}{2}}$.
\end{prop}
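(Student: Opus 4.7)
The plan is to establish the three assertions in turn, each one resting on the prox-regularity inequality \eqref{ineqphi}.

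The first task is to prove that the metric projection is single-valued on $C_\rho$. If $y,y'\in C$ both realize $d_C(x)=:r<\rho$ for some $x\in C_\rho$, let $\zeta:=(x-y)/r$, which is the unit outward normal to $C$ at $y$. Expanding the identity $|x-y'|=|x-y|$ yields $\langle\zeta,y'-y\rangle=|y-y'|^2/(2r)$, a quantity strictly larger than the bound $|y-y'|^2/(2\rho)$ allowed by \eqref{ineqphi} unless $y=y'$. A standard compactness argument then upgrades uniqueness to continuity of ${\rm proj}_C$ on $C_\rho\setminus C$.

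To identify $\nabla d_C(x)$ for $x\in C_\rho\setminus C$, write $y:={\rm proj}_C(x)$ and $\zeta:=(x-y)/d_C(x)$. Expanding $|z-y|^2$ and using the trivial upper bound $d_C(z)\le|z-y|$ produces the one-sided first-order estimate
\[
d_C(z)-d_C(x)\le \langle\zeta,z-x\rangle+\frac{|z-x|^2}{2d_C(x)}.
\]
Exchanging the roles of $x$ and $z$ and invoking the continuity of the projection (so that $\zeta_z:=(z-{\rm proj}_C(z))/d_C(z)\to\zeta$) gives a matching lower bound. Letting $z\to x$ identifies $\nabla d_C(x)=\zeta=(x-{\rm proj}_C(x))/d_C(x)$.

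For the Lipschitz estimate on ${\rm proj}_C$ restricted to $C_{\rho/2}$, let $x_1,x_2\in C_{\rho/2}\setminus C$ have projections $y_i$ and outward normals $\zeta_i=(x_i-y_i)/d_C(x_i)$. Multiplying $\langle\zeta_i,y_j-y_i\rangle\le|y_i-y_j|^2/(2\rho)$ through by $d_C(x_i)<\rho/2$ and summing the two resulting inequalities, a rearrangement yields
\[
\langle x_2-x_1,y_2-y_1\rangle\ge\tfrac12|y_1-y_2|^2,
\]
whence Cauchy-Schwarz gives $|y_1-y_2|\le 2|x_1-x_2|$. The cases in which one or both $x_i$ belong to $C$ are handled analogously with $y_i=x_i$.

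The Lipschitz bound on $\nabla d_C$ in $C_{\rho/2}\setminus C$ finally exploits the geometric observation from the second step that $\nabla d_C(x)=\nu({\rm proj}_C(x))$, where $\nu:\partial C\to S^{n-1}$ is the outward unit normal. Its Lipschitz constant is therefore at most the product of the Lipschitz constants of $\nu$ and of ${\rm proj}_C$. The latter is $2$ by the previous step; the former follows from \eqref{ineqphi} applied at two boundary points, together with the $\mathcal{C}^{1,1}$ structure of $\partial C$ implicit in Definition~\ref{posreach}. I expect this last conversion to be the principal technical hurdle, since \eqref{ineqphi} controls only the component of $\nu_1-\nu_2$ along $y_1-y_2$, and one must supplement it with a tangential argument exploiting the unit length of $\nu_i$ and the local graph description of $\partial C$ in order to bound the whole of $\nu_1-\nu_2$.
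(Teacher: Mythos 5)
Your treatment of the first three assertions is correct and follows the standard route (the paper itself offers no proof, deferring to \cite{CT}): the uniqueness computation from \eqref{ineqphi}, the two one-sided expansions of $|z-y|$ yielding the gradient formula, and the summed hypomonotonicity inequality giving $\langle x_2-x_1,y_2-y_1\rangle\ge\bigl(1-\tfrac{d_C(x_1)+d_C(x_2)}{2\rho}\bigr)|y_1-y_2|^2>\tfrac12|y_1-y_2|^2$ all check out. Note only that in the first step you need the smoothness built into Definition~\ref{posreach} to identify $(x-y)/d_C(x)$ with \emph{the} unit external normal at $y$, since \eqref{ineqphi} is stated for that normal alone; you use this silently, but it is legitimate here.

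The last step, however, is a genuine gap, and not one that the tangential argument you anticipate can close. The factorization $\nabla d_C=\nu\circ{\rm proj}_C$ is correct, but to extract the constant $2$ from ${\rm Lip}({\rm proj}_C)=2$ you would need the Gauss map $\nu$ to be $1$-Lipschitz on $\partial C$, and this does not follow from $\rho$-prox-regularity: \eqref{ineqphi} is a \emph{one-sided} (outward) curvature bound, so for instance a closed ball of radius $r\ll\rho$ is $\rho$-prox-regular for every $\rho$ (being convex and smooth), yet $\nu(y)=y/r$ has Lipschitz constant $1/r$, and correspondingly $\nabla d_C(x)=x/|x|$ has Lipschitz ratio approaching $1/r>2$ between nearby points just outside $\partial C$ but inside $C_{\rho/2}$. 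Thus no argument can deliver the constant $2$ from the stated hypotheses; what is needed is a two-sided curvature bound on $\partial C$ (equivalently, prox-regularity of the closure of the complement, or the quantitative $\mathcal{C}^{1,1}$ bound on the boundary that the standing assumption $(H_{1.1})$ in fact supplies), and even then one obtains ${\rm Lip}(\nabla d_C)\le 2\,{\rm Lip}(\nu)$ rather than $2$. What \emph{does} follow from prox-regularity alone, and is what the paper actually exploits downstream, is that $x\mapsto x-{\rm proj}_C(x)=\nabla\bigl(\tfrac12 d_C^2\bigr)(x)$ is Lipschitz on $C_{\rho/2}$ (constant $3$, immediate from your third step) and that $\nabla d_C$ is Lipschitz with constant of order $1/\delta$ on $\{x:\ d_C(x)\ge\delta\}$. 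To repair the step, prove the Lipschitz bound for $\nu$ from the $\mathcal{C}^{1,1}$ (here $\mathcal{C}^{3}$, compact) boundary and record the resulting constant honestly, rather than the bare $2$.
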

The proof of this Proposition can be found, e.g., in \cite{CT}.

The fact that the distance from a $\rho$-prox-regular set $C$ is of class ${\mathcal C}^{1,1}$ in $C_{\frac{\rho}{2}}\setminus C$
will play a fundamental role in the sequel. In particular, given a moving closed set $C(t)$, $t\in [0,T]$, 
we wish to discuss the differentiability of the Lipschitz map 
\begin{equation}\label{defP}
x\mapsto x-\text{proj}_{C(t)}(x) := P(t,x),
\end{equation}
namely of the gradient $\nabla_x$ of $\frac12 d^2_{C(t)}(x)$ with respect to the state variable $x$, 
in the case where the boundary of $C(t)$ is an immersed
manifold of class ${\mathcal C}^2$. Then it is well known that $d^2_{C(t)}(\cdot)$ is of class ${\mathcal C}^2$ in $C_{\rho}(t)\setminus C(t)$,
where $\rho$ depends only on the global lower bound of the curvature of $\partial C(t)$, see, e.g., \cite[Theorem 3.1]{AS}.
If $x\in {\rm int}\,C(t)$, then $P(t,x)$ vanishes in a neighborhood of $x$ and so it is differentiable in the classical sense, with zero Jacobian. 
If $x \notin C(t)$, then $P(t,x)=d_{C(t)}(x)\nabla_{x}d_{C(t)}(x)$, so that
\begin{equation}\label{D2d}
\nabla_{x}P(t,x)= d_{C(t)}(x)\nabla_{x}^{2}d_{C(t)}(x)+ \nabla_{x}d_{C(t)}(x)\otimes \nabla_{x}d_{C(t)}(x)=: P_x(t,x),
\end{equation}
where we recall that if $v$ and $w$ are column vectors, then $v\otimes w$ denotes the matrix $v \, w^\top$ and we denote by $\nabla_x^2$
the Hessian with respect to the state variable $x$.

Denote by $n(t,x)$ the unit external normal to $C(t)$ at $x$, if $x\in \partial C(t)$, and $0$ if $x \in {\rm int}\, C(t)$.
We will adopt the following convention:
\begin{center}
if $x\in \partial C(t)$, by writing $\nabla_{x}d_{C(t)}(x)$ we mean $n(t,x)$.
\end{center}
With this notation, one can extend       
$\nabla_{x}P(t,x)$ also to $x\in \partial C(t)$, by using \eqref{D2d}. Of course, this does not mean that $P$ is differentiable at $\partial C(t)$: 
the Clarke generalized gradient (with respect to $x$) of $P(t,x)$ at $x\in \partial C(t)$ is $\partial_x P(t,x) = {\rm co}\, \{ 0 , P_x(t,x)\}$.

We will consider also the signed distance
\[
d_S(t,x):= 
\begin{cases}
d_{C(t)}(x)&\text{ if }x \in C^c(t)\\
-d_{C(t)^c}(x)&\text{ if }x\in C(t),
\end{cases}
\]
where $C(t)^c$ denotes the complement of $C(t)$. It is well known that $d_S(t,\cdot)$ is of class ${\mathcal C}^{2}$ around $\partial C(t)$
(see, e.g., Proposition 2.2.2 (iii) in \cite{CS}) if $\partial C (t)$ is a manifold of class ${\mathcal C}^2$ and $C(t)$ has nonempty interior.
It is also easy to see that $d_S(\cdot,x)$ is Lipschitz if $C(\cdot)$ is so.
\section{Standing assumptions and statement of the main results}\label{stat}
The following assumptions will be valid throughout the paper.
\begin{itemize}
\item[$(H_{1})$:]\quad $C:[0,\infty)\leadsto \R^{n}$ is a set-valued map with the following properties:
	\begin{itemize}
	\item[$(H_{1.1})$:]\quad for all $t \in [0,T]$, $C(t)$ is nonempty and compact and there exists $\rho >0$ such that $C(t)$ is 
$\rho$-prox regular. Moreover, $C(t)$ has a $C^{3}$-boundary.
	\item[$(H_{1.2})$:]\quad $C$ is $\gamma$-Lipschitz.
	\end{itemize}
\item[$(H_{2})$:]\quad $U\subset \R^{m}$ is compact and convex.
\item[$(H_{3})$:]\quad $f: \R^{n}\times U \rightarrow \R^{n}$ is a single valued map with the following properties:
	\begin{itemize}
	\item[$(H_{3.1})$:]\quad that there exist $\beta\geq 0 $ such that $\vert f(x,u)\vert \leq \beta$ for all $(x,u)$;
	\item[$(H_{3.2})$:]\quad $f(\cdot,\cdot)$ is of class $C^{1}$;
	\item[$(H_{3.3})$:]\quad $f(\cdot,\cdot)$ is Lipschitz with Lipschitz constant $k$;
	\item[$(H_{3.4})$:]\quad $f(x,U)$ is convex for all $x\in\R^{n}$;
	\end{itemize}
\item[$(H_{4})$:]\quad $h: \R^n \rightarrow \R $ is of class $C^{1}$.	
\end{itemize} 
If $C(t)=\lbrace x: g(t,x)\leq 0 \rbrace$, with $g(\cdot,x)$ Lipschitz, $g(t,\cdot)$ 
of class $C^{2,1}$ is possible to impose conditions directly on the map $g$ in order to let $(H_{1.1})$ and $(H_{1.2})$ hold.
This is discussed in detail in \cite{ANT}.

We are interested in determining necessary conditions for solutions of the following minimization problem, that we call \textit{Problem $(P)$}:

\medskip

Minimize $h(x(T))$ subject to 
\begin{equation}  \label{probmin}
\left\{ \begin{array}{l}
\dot{x}(t)\,\in \,  -N_{C(t)}(x(t)) + f(x(t),u(t))),
\\
x(0)=x_{0}\in C(0)\;,
\\
\end{array}\right.
\end{equation}
with respect to $u:[0,T] \rightarrow U$, $u$ measurable (labeled as \textit{admissible control}).

\medskip

Let us recall that the dynamics \eqref{probmin} implicitly contains the state constraint
\[
x(t)\in C(t)\qquad\forall t\in [0,T]. 
\]
Existence of minimizers for $(P)$ can be obtained by standard methods (even under less stringent assumptions on $f$), essentially thanks to the
graph closedness of the normal cone to a prox-regular set (see, e.g., \cite[Proposition 7]{CT}).

Let $(x_{\ast},u_{\ast})$ be a minimizer. We will impose an \textit{outward (resp. inward) pointing condition} on 
$f(x_\ast(t),u_\ast(t))$ with respect to 
the boundary of $C(t)$. To this aim, we introduce the (possibly empty) set
\begin{equation}\label{defidelta}
I_{\partial}:= \lbrace t\in [0,T]:x_{\ast}(t)\in  \partial C(t)\rbrace
\end{equation}
and require that there exists $\sigma >0$ 
for which either
\begin{equation}\label{m1}\tag{$M_1$}
\frac{\partial d_S}{\partial t}(t,x_{\ast}(t))+\langle \nabla_x d_S(t,x_{\ast}(t)), f(x_{\ast}(t),u) \rangle 
\geq  \gamma+\beta +\sigma\quad \ \text{for a.e. } t\in I_{\partial}\ \text{and for all } u\in U
\end{equation}
or 
\begin{equation}\label{m2}\tag{$M_2$}
\frac{\partial d_S}{\partial t}(t,x_{\ast}(t))+\langle \nabla_x d_S(t,x_{\ast}(t)), f(x_{\ast}(t),u) \rangle 
\leq -\sigma \quad \ \text{for a.e. } t\in I_{\partial}\ \text{and for all } u\in U
\end{equation}
hold, where we recall that $d_S(t,x)$ denotes the signed distance between $x$ and $C(t)$. Of course, if $I_\partial=\emptyset$ both
conditions are automatically satisfied.
\begin{rem}
More in general, we can assume that $[0,T]$ can be split into finitely many subintervals such 
that $I_{\partial}$ does not contain their end points and in each subinterval either \eqref{m1} or \eqref{m2} holds. Without loss of generality,
the proofs will be carried out in the case where we have only one interval and either \eqref{m1} or \eqref{m2} hold.
\end{rem}
Before stating the main result of the paper, we recall that in Section \ref{prelim} we have given a meaning to 
\[
\nabla_x d_{C(t)}(x_\ast (t))\qquad\text{and}\qquad \nabla_x^2 d_{C(t)}(x_\ast (t))
\]
also for $t\in I_\partial$.
\begin{thm}\label{necopt}
Assume that $(H_1)$, \ldots, $(H_4)$ hold and consider the minimization problem \eqref{probmin}. Let $(x_\ast,u_\ast)$ be a global minimizer
for which either \eqref{m1} or \eqref{m2} are valid. Then there exist a $BV$ adjoint vector $p:[0,T]\rightarrow \R^n$,
a finite signed Radon measure $\mu$ on $[0,T]$, and measurable vectors $\xi,\eta: [0,T]\rightarrow \R^n$, with $\xi(t)\ge 0$ for $\mu$-a.e. $t$
and $0\le\eta (t)\le \beta+\gamma$ for a.e. $t$, satisfying the following properties:

\smallskip

\noindent $\bullet$ (adjoint equation) \qquad for all continuous functions $\varphi :[0,T]\rightarrow \R^n$
\begin{equation}\label{adj00}
\begin{split}
-\int_{[0,T]} \langle \varphi(t),dp(t) \rangle &= -\int_{[0,T]}\langle \varphi (t),\nabla_x d_{C(t)}(x_\ast (t))\rangle \xi (t)\, d\mu (t)\\
&\qquad - \int_{[0,T]} \langle \varphi (t),\nabla_x^2 d_{C(t)}(x_\ast (t)) p(t) \rangle \eta(t)\, dt\\
&\qquad + \int_{[0,T]} \langle \varphi (t),\nabla_x f(x_\ast (t),u_\ast(t)) p(t) \rangle\, dt, 
\end{split}
\end{equation}

\smallskip

\noindent $\bullet$ (transversality condition) $\qquad -p(T) = \nabla h(x_\ast(T))$,

\smallskip

\noindent $\bullet$ (maximality condition)
\begin{equation}\label{PMP} 
\begin{split}
\langle p(t), \nabla_{u}f(x_{\ast}(t),u_{\ast}(t)) u_{\ast}(t) \rangle = \max_{u\in U}\langle p(t), 
\nabla_{u}f(x_{\ast}(t),u_{\ast}(t))u \rangle \quad \text{for a.e. }t\in [0,T] .
\end{split}
\end{equation}
\end{thm}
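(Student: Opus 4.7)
The plan is to follow the Moreau-Yosida penalization scheme of \cite{BPK}, replacing their uniform-convexity argument by sharper distance estimates in the style of \cite{ML} together with the inward/outward pointing condition \eqref{m1}/\eqref{m2}. For each $\lambda>0$ I would consider the smooth controlled ODE
\begin{equation*}
\dot{x}_\lambda(t)=f(x_\lambda(t),u(t))-\frac{1}{\lambda}P(t,x_\lambda(t)),\qquad x_\lambda(0)=x_0,
\end{equation*}
with $P$ as in \eqref{defP}, and the penalized Mayer problem of minimizing $J_\lambda(u)=h(x_\lambda(T))+\frac{1}{2}\|u-u_\ast\|_{L^2}^2$ over admissible $u$. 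Existence of a minimizer $(x_\lambda,u_\lambda)$ follows from compactness and $(H_{3.4})$, while standard Moreau-Yosida convergence together with Gronwall's lemma and the global optimality of $(x_\ast,u_\ast)$ yields $x_\lambda\to x_\ast$ uniformly and $u_\lambda\to u_\ast$ in $L^2$ as $\lambda\to 0^+$.

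Next I would derive the classical first-order necessary conditions for the smooth penalized problem, introducing the Lipschitz adjoint $p_\lambda$ with $p_\lambda(T)=-\nabla h(x_\lambda(T))$ and
\begin{equation*}
\dot{p}_\lambda=-\nabla_x f(x_\lambda,u_\lambda)^\top p_\lambda+\frac{1}{\lambda}P_x(t,x_\lambda)^\top p_\lambda;
\end{equation*}
the variational inequality satisfied by $u_\lambda$, combined with the $L^2$-term in $J_\lambda$, becomes in the limit the linearized maximality \eqref{PMP}. The splitting \eqref{D2d} of $P_x$ separates the stiff part of $\dot p_\lambda$ into a Hessian contribution $\tfrac{d_{C(t)}(x_\lambda)}{\lambda}\nabla_x^2 d_{C(t)}(x_\lambda)\,p_\lambda$, whose scalar factor is pointwise bounded by $\beta+\gamma$ since it must balance $f$ and the Lipschitz motion of $C(\cdot)$, and an impulsive contribution $\tfrac{1}{\lambda}\langle \nabla_x d_{C(t)}(x_\lambda),p_\lambda\rangle\,\nabla_x d_{C(t)}(x_\lambda)$, which is the natural candidate for the measure $\xi\,d\mu$ appearing in \eqref{adj00}.

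The main obstacle, and the heart of the argument, is a uniform $L^1(0,T)$ bound on the impulsive scalar $\tfrac{1}{\lambda}|\langle \nabla_x d_{C(t)}(x_\lambda),p_\lambda\rangle|$. In \cite{BPK} this was obtained from coercivity of the Hessian of a smooth extension of $d_C^2$ inside $C$, available only under uniform convexity. Here I would instead differentiate the signed distance $d_S(t,x_\lambda(t))$ along trajectories, as in \cite{ML}, and exploit \eqref{m1} or \eqref{m2} to show that, off the set $I_\partial$, $x_\lambda$ crosses any narrow layer around $\partial C(t)$ in time $O(\lambda)$, while on $I_\partial$ the pointing condition bounds both the sign and the magnitude of the normal velocity of $x_\lambda$ and lets one estimate the total impulsive mass directly. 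Coupled with a Gronwall-type argument on $|p_\lambda|$, this gives uniform bounds on $\|p_\lambda\|_\infty$ and on the total variation of $p_\lambda$.

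Finally, Helly's selection theorem furnishes a subsequence such that $p_\lambda\to p$ pointwise a.e.\ with $p\in BV$ and $dp_\lambda \stackrel{\ast}{\rightharpoonup} dp$ as measures on $[0,T]$. The limit objects in \eqref{adj00} are identified by weak-$\ast$ compactness: $\eta$ is the weak-$\ast$ $L^\infty$ limit of $d_{C(t)}(x_\lambda)/\lambda$, giving $0\le\eta\le\beta+\gamma$; the product $\xi\,d\mu$ is the weak-$\ast$ limit of $\tfrac{1}{\lambda}\langle \nabla_x d_{C(t)}(x_\lambda),p_\lambda\rangle\,dt$ as a signed measure, with $\xi\ge 0$ $\mu$-a.e.\ following from the nonnegativity of $\tfrac{1}{\lambda}d_{C(t)}(x_\lambda)$ and the direction of the outward normal prescribed by \eqref{m1}/\eqref{m2}. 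The transversality condition is inherited directly from $p_\lambda(T)=-\nabla h(x_\lambda(T))$, and \eqref{PMP} follows from passing to the limit in the first-order condition on $u_\lambda$, using the convexity of $U$ and of $f(x,U)$.
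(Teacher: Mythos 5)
Your overall strategy coincides with the paper's: penalize with the Moreau--Yosida term and the $\tfrac12\|u-u_\ast\|_{L^2}^2$ correction, write the adjoint system for the penalized problem, split $P_x$ as in \eqref{D2d} into a Hessian part and an impulsive normal part, and pass to the limit via Helly's theorem and weak-$\ast$ compactness. The transversality and maximality conditions, the bound $0\le\eta\le\beta+\gamma$ coming from \eqref{geps}, and the identification of the limit objects are all handled as in the paper.

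The gap is in the step you yourself single out as the heart of the matter: the uniform $L^1$ bound on $\tfrac{1}{\lambda}\langle\nabla_x d_{C(t)}(x_\lambda),p_\lambda\rangle$. The pointing condition \eqref{m1}/\eqref{m2} controls the \emph{primal} trajectory only: differentiating $d_S(t,x_\lambda(t))$ along trajectories shows that $x_\lambda$ crosses $\partial C(t)$ at most once (Proposition \ref{monot} in the paper), but it gives no control on the normal component of the \emph{adjoint} vector, which is an independent unknown driven backwards by a stiff linear equation; so ``estimating the total impulsive mass directly'' from the pointing condition cannot work as stated. What the paper actually does is derive a scalar ODE for $\xi_n(t)=\langle p_n(t),\nabla_x d_{C(t)}(x_n(t))\rangle$ itself: using $\nabla_x^2 d_{C(t)}\,\nabla_x d_{C(t)}=0$ and $|\nabla_x d_{C(t)}(x_n(t))|\in\{0,1\}$ away from the switching time, one obtains
\[
-\dot\xi_n(t)+\frac{1}{\varepsilon_n}\,\xi_n(t)=O(1)
\]
uniformly in $n$ (equation \eqref{relatxi}); multiplying by $\operatorname{sign}\xi_n$, integrating, and invoking the $L^\infty$ bound on $\xi_n$ from Lemma \ref{lemma1} yields $\frac{1}{\varepsilon_n}\int_0^T|\xi_n|\,dt\le\bar k$ (estimate \eqref{estxi}), which is what produces the total-variation bound on $p_n$ and the measure $\xi\,d\mu$ in \eqref{adj00}. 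This algebraic device is absent from your sketch and is not a routine consequence of what you wrote. Relatedly, the single-crossing property is needed \emph{before} you may invoke ``the classical first-order conditions for the smooth penalized problem'': $P(t,\cdot)$ is only Lipschitz across $\partial C(t)$, so the linearization and the adjoint ODE are legitimate only because Proposition \ref{monot} confines the nonsmoothness of the penalized right-hand side to a single time; you should make that dependence explicit rather than treating the penalized dynamics as smooth from the outset.
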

Further conditions, in particular on discontinuities of $p$ or, equivalently, on Dirac masses for $\mu$, will be discussed in Proposition
\ref{condp} below. Here we observe only that on $I_0:= [0,T]\setminus I_\partial$, namely on the (possibly empty) set where $x_\ast (t)$
belongs to the interior of $C(t)$, $p$ is absolutely continous and satisfies the classical adjoint equation
\[
-\dot{p}(t) = \nabla_x  f(x_\ast (t),u_\ast (t)) \, p(t)\quad \text{a.e.,}
\]
so that $\mu$, $\xi$, and $\eta$ do not play any role on that set.
This is a simple consequence of \eqref{adj00}.

The proof of Theorem \ref{necopt} is contained in Sections \ref{ACP} and \ref{convergence} and is divided into several propositions, containing
estimates on a sequence of adjoint vectors. Sections \ref{Dynamic} and \ref{MONOD} are devoted to estimates on solutions to
suitable approximations of the primal problem.

\section{Results on the sweeping process and its regularization}\label{Dynamic}
Set $n(t,x)$ to be the unit external normal to $C(t)$ at $x\in \partial C(t)$ and $0$ if $x \in {\rm int}\, C(t)$, and not defined if $x \notin  C(t)$.
Observe that $n(t,x) = \nabla_x d_S (t,x)$ for all $x\notin \text{int}\, C(t)$.

For any solution $x_\ast$ of \eqref{probmin}, 
the general theory on the sweeping process (see, e.g., \cite[Theorem 3.1]{thib}), yields that 
\begin{equation}\label{boundxdot}
\vert \dot{x}_{\ast}(t) \vert\leq  \gamma + \beta\;\text{ for a.e }t\in[0,T].
\end{equation}
The main tool that we are going to use is an approximate control problem, where the dynamics is the Moreau-Yosida regularisation of
\eqref{probmin} and the cost is the original one, plus a penalization term. More precisely, 
the approximate problem is the following one:

\bigskip

For a given $\varepsilon > 0$ and a given admissible control $u_\ast$,
\begin{equation}\label{probeps}
\text{Minimize}  \qquad      h(x(T))+\frac12 \int_{0}^{T}|u(t)-u_{\ast}(t)|^{2}dt
\end{equation}
subject to 
\begin{equation}\label{primeqeps}
\dot{x}(t)\,= \,  -\frac{1}{\varepsilon}\big(x(t)-\text{proj}_{C(t)}(x(t))\big) + f(x(t),u(t)),\quad x(0)=x_{0}\in C(0),
\end{equation}
over all admissible controls $u:[0,T]\rightarrow U$.

\medskip

We label the above problem as $(P_\varepsilon(u_\ast))$. By standard results, for every $\varepsilon > 0$ there exists a global minimizer $u_{\varepsilon}$. 
If $u_\varepsilon$ is such a minimizer
and $x_\varepsilon$ is the solution of \eqref{primeqeps} with $u_\varepsilon$ in place of $u$, we will refer to $(x_\varepsilon,u_\varepsilon)$
as an optimal couple for $(P_\varepsilon(u_\ast))$.

\bigskip

As a preliminary result on $(P_\varepsilon(u_\ast))$, we are going to prove that, thanks to the Lipschitz continuity of the metric projection onto
$C(t)$ on the set $C_\rho$ for each $t$,
and the boundedness of the Lipschitz perturbation $f$, the Cauchy problem \eqref{primeqeps} admits one and only one
solution on the interval $[0,T]$, for every fixed admissible control $u$.
Our first result is in fact concerned with existence, uniqueness and some estimates on such solutions, uniform with respect to $\varepsilon$.
\begin{prop} \label{compactnessepsilon}
Let $C, f, U, h$ be given satisfying assumptions $(H_1)$, \ldots, $(H_4)$. 
Let $\varepsilon_{n}\downarrow0$ and let $\{ u_{n}\}$ be a sequence of admissible controls.
Then, for every $n$ large enough, the problem \eqref{primeqeps} with $\varepsilon_n$ in place of $\varepsilon$ and $u_n$ in place of $u$
admits one and only one solution $x_n$ on the interval $[0,T]$. Such solutions
are Lipschitz uniformly with respect to $n$, with Lipschitz constant $\gamma+2\beta$, and moreover the estimate
\begin{equation}\label{geps}
d_{C(t)}(x_n(t)) \leq \varepsilon_n(\beta+\gamma)\big(1-e^{-t/\varepsilon_n}\big)\leq\varepsilon_n(\beta+\gamma)\quad \text{for all }\, t\in [0,T].
\end{equation}
holds.
\end{prop}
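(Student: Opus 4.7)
The plan is to obtain existence and uniqueness on a small initial interval by classical Cauchy--Lipschitz, then establish the key estimate \eqref{geps} on that interval, and finally use the estimate to bootstrap to $[0,T]$.

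First I would observe that, for any fixed $t$, the map $x\mapsto x-\mathrm{proj}_{C(t)}(x)$ is well defined and Lipschitz with constant $3$ (via Proposition \ref{propdist}) on the open tube $\{x: d_{C(t)}(x)<\rho/2\}$, while $f$ is bounded and Lipschitz in $x$ uniformly in $u$ by $(H_3)$. Combined with the Lipschitz dependence of $C(\cdot)$ from $(H_{1.2})$, this makes the right-hand side of \eqref{primeqeps} a Carath\'eodory field, Lipschitz in $x$ on $\{(t,x): d_{C(t)}(x)<\rho/2\}$, so that for any admissible $u_n$ a unique solution $x_n$ exists on a maximal interval $[0,\tau_n)$ with $x_n$ staying in that tube. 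Note $x_0\in C(0)$ so $d_{C(0)}(x_0)=0<\rho/2$, and the interval is nontrivial.

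Next I would derive the distance estimate on $[0,\tau_n)$. Set $\varphi_n(t):=d_{C(t)}(x_n(t))$; since $t\mapsto C(t)$ is $\gamma$-Lipschitz and $x_n$ is (locally) Lipschitz, $\varphi_n$ is Lipschitz in $t$, hence a.e.\ differentiable. At any $t$ of differentiability with $\varphi_n(t)>0$, the chain rule and the identity $\nabla_x d_{C(t)}(x_n(t))=(x_n(t)-\mathrm{proj}_{C(t)}(x_n(t)))/\varphi_n(t)$ yield
\begin{equation*}
\dot{\varphi}_n(t)=\frac{\partial d_{C(t)}}{\partial t}(x_n(t))+\Big\langle \nabla_x d_{C(t)}(x_n(t)),\,-\tfrac{1}{\varepsilon_n}\bigl(x_n(t)-\mathrm{proj}_{C(t)}(x_n(t))\bigr)+f(x_n(t),u_n(t))\Big\rangle\le\gamma+\beta-\tfrac{1}{\varepsilon_n}\varphi_n(t),
\end{equation*}
where the bound on the time derivative of $d_{C(t)}$ by $\gamma$ uses the Lipschitz dependence of $C(t)$ on $t$, and $|f|\le\beta$ comes from $(H_{3.1})$. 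Whenever $\varphi_n(t)=0$ and $\dot\varphi_n(t)$ exists, $\dot\varphi_n(t)=0$ because $\varphi_n\ge 0$ attains a minimum there. A standard Gronwall comparison with the solution $\psi(t)=\varepsilon_n(\gamma+\beta)(1-e^{-t/\varepsilon_n})$ of $\dot\psi=(\gamma+\beta)-\psi/\varepsilon_n$, $\psi(0)=0$, then gives $\varphi_n(t)\le\psi(t)\le\varepsilon_n(\gamma+\beta)$ on $[0,\tau_n)$, which is \eqref{geps}.

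Once \eqref{geps} is available, I would take $n$ so large that $\varepsilon_n(\gamma+\beta)<\rho/2$; then $x_n(t)$ stays strictly inside the Lipschitz tube as long as it is defined, so no finite-time blow-up or boundary exit of the tube is possible, and the solution extends to $[0,T]$ by the standard continuation argument. Uniqueness is inherited from the local Lipschitz dependence of the right-hand side in $x$. Finally, \eqref{geps} together with $|f|\le\beta$ plugged back into \eqref{primeqeps} yields $|\dot{x}_n(t)|\le\tfrac{1}{\varepsilon_n}\varphi_n(t)+\beta\le(\gamma+\beta)+\beta=\gamma+2\beta$ a.e., which is the claimed uniform Lipschitz constant.

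The main delicate point is the Gronwall step for $\varphi_n$: the chain rule is only available off $C(t)$, so one must separately handle the null set where $\varphi_n$ vanishes and invoke absolute continuity of the Lipschitz function $\varphi_n$ to integrate the differential inequality on the open set $\{\varphi_n>0\}$. This is a standard but essential technical point; everything else is bookkeeping around Proposition \ref{propdist} and the hypotheses $(H_1)$--$(H_3)$.
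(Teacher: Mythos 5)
Your proof is correct and rests on the same core estimate as the paper: the differential inequality $\dot{\varphi}_n(t)\le \gamma+\beta-\varphi_n(t)/\varepsilon_n$ for $\varphi_n(t)=d_{C(t)}(x_n(t))$, integrated by Gronwall to yield \eqref{geps} — this is precisely Lemma \ref{Lemma3.3}, which in turn relies on \cite{ML} for the chain-rule/Lipschitz-in-$t$ step that you correctly flag as the delicate point. The only structural difference is the continuation: the paper first establishes everything on a short interval $[0,\theta]$ with $\theta<\rho/(3(2\beta+\gamma))$ and then iterates the argument over finitely many such subintervals, whereas you run a direct maximal-interval argument, using \eqref{geps} together with $\varepsilon_n(\beta+\gamma)<\rho/2$ to conclude that the solution stays uniformly away from the boundary of the tube on which the right-hand side of \eqref{primeqeps} is Lipschitz; both routes are valid, and yours is the more economical way to close the continuation step.
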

The proof of Proposition \ref{compactnessepsilon} follows the arguments developed in \cite[Section 3]{ML} and will be sketched
after some technical results.
 
First of all, let $x(t)$ be absolutely continuous and set $g(t):= d_{C(t)}(x(t))$. Recalling Lemma 3.1 in \cite{ML}, we have, for a.e. $t\in[0,T]$,
\begin{equation}\label{ST3.1}
\dot{g}(t) g(t) \leq \langle \dot{x}(t),x(t)-\text{proj}_{C(t)}(x(t)) \rangle+\gamma g(t),
\end{equation}
provided
\begin{equation}\label{<rho}
d_{C(t)}(x(t)) < \rho \qquad \text{for all }\, t\in [0,T].
\end{equation}
As an immediate corollary, we obtain the following Lemma.
\begin{lem}\label{Lemma3.3}
For every $\varepsilon  > 0$, let the admissible control $u_\varepsilon$ be given and let $x_{\varepsilon}$ be the corresponding solution of
\eqref{primeqeps}.
Set
\[
g_{\varepsilon}(t)=d_{C(t)}(x_{\varepsilon}(t)),\qquad t\in [0,T]
\]
and assume that $x_\varepsilon$ satisfies \eqref{<rho}.
Then
\begin{equation}\label{gepss}
g_{\varepsilon}(t)\leq \varepsilon(\beta+\gamma)\big(1-e^{-t/\varepsilon}\big)\leq\varepsilon(\beta+\gamma)\quad \text{for all }\, t\in [0,T].
\end{equation}
\end{lem}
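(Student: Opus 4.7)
The plan is to apply the key inequality \eqref{ST3.1} directly to $x_\varepsilon$ and then reduce to a scalar linear differential inequality that admits $\varphi(t):=\varepsilon(\beta+\gamma)(1-e^{-t/\varepsilon})$ as an upper comparison function. Note that $\varphi$ is the solution of $\dot\varphi = -\frac{1}{\varepsilon}\varphi+(\beta+\gamma)$ with $\varphi(0)=0$, and that $g_\varepsilon(0)=d_{C(0)}(x_0)=0$ since $x_0\in C(0)$.

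First I would substitute the dynamics \eqref{primeqeps} into the right-hand side of \eqref{ST3.1}. Using $|x_\varepsilon(t)-{\rm proj}_{C(t)}(x_\varepsilon(t))|=g_\varepsilon(t)$, the contribution of the normal-cone term becomes exactly $-\frac{1}{\varepsilon}g_\varepsilon(t)^2$, while Cauchy-Schwarz together with $(H_{3.1})$ bounds the perturbation contribution by $\beta g_\varepsilon(t)$. Combined with the $\gamma g_\varepsilon(t)$ term already present in \eqref{ST3.1}, one obtains, for a.e.\ $t\in[0,T]$,
\[
\dot g_\varepsilon(t)\,g_\varepsilon(t)\;\le\;-\frac{1}{\varepsilon}g_\varepsilon(t)^{2}+(\beta+\gamma)\,g_\varepsilon(t).
\]
At points where $g_\varepsilon(t)>0$, division yields the linear differential inequality $\dot g_\varepsilon(t)\le -\frac{1}{\varepsilon}g_\varepsilon(t)+(\beta+\gamma)$.

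Next I would run a standard comparison argument between $g_\varepsilon$ and $\varphi$. Define $t_{0}:=\sup\{t\in[0,T]:g_\varepsilon(s)\le\varphi(s)\text{ for all }s\in[0,t]\}$, which is well defined and positive by continuity and the common initial value $0$. Assume for contradiction $t_{0}<T$; then $g_\varepsilon(t_{0})=\varphi(t_{0})$ and there exists $t_{1}\in(t_{0},T]$ with $g_\varepsilon(t)>\varphi(t)\ge 0$ on $(t_{0},t_{1}]$. On this interval $g_\varepsilon>0$, so the linear inequality above applies; subtracting the ODE satisfied by $\varphi$ gives $\frac{d}{dt}(g_\varepsilon-\varphi)\le -\frac{1}{\varepsilon}(g_\varepsilon-\varphi)$, hence $\frac{d}{dt}\bigl[e^{t/\varepsilon}(g_\varepsilon-\varphi)\bigr]\le 0$ a.e. Since $e^{t_{0}/\varepsilon}(g_\varepsilon(t_{0})-\varphi(t_{0}))=0$, integration over $[t_{0},t]$ yields $g_\varepsilon(t)\le\varphi(t)$ on $[t_{0},t_{1}]$, contradicting the choice of $(t_{0},t_{1}]$. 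Therefore $t_{0}=T$ and the first inequality in \eqref{gepss} holds; the second inequality is the trivial bound $1-e^{-t/\varepsilon}\le 1$.

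The only delicate point I expect is the bookkeeping around the set where $g_\varepsilon$ may vanish: the inequality $\dot g_\varepsilon\le -\frac{1}{\varepsilon}g_\varepsilon+(\beta+\gamma)$ is only derived at points with $g_\varepsilon>0$, so one cannot simply integrate it on all of $[0,T]$. The ``first crossing'' argument above sidesteps this issue because the set where $g_\varepsilon>\varphi$ automatically lies in $\{g_\varepsilon>0\}$, making the comparison legitimate; aside from this, the proof is a routine consequence of Lemma 3.1 of \cite{ML} (i.e., \eqref{ST3.1}) and the form of the Moreau-Yosida regularized dynamics \eqref{primeqeps}.
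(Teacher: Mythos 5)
Your proof is correct and follows essentially the same route as the paper: substitute the regularized dynamics \eqref{primeqeps} into \eqref{ST3.1}, divide by $g_{\varepsilon}$ where it is positive to obtain the linear differential inequality $\dot{g}_{\varepsilon}\le -\frac{1}{\varepsilon}g_{\varepsilon}+\beta+\gamma$, and conclude by a Gronwall-type comparison. The only difference is that you treat the set where $g_{\varepsilon}$ vanishes via an explicit first-crossing argument, whereas the paper defers that case to the proof of Lemma 3.3 in \cite{ML} and then invokes Gronwall's lemma directly.
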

\begin{proof}
From \eqref{ST3.1} we obtain 
\[
\dot{g}_{\varepsilon}(t) g_{\varepsilon}(t) \leq \gamma g_{\varepsilon}(t)-\frac{1}{\varepsilon} 
g_{\varepsilon}^{2}(t)+g_{\varepsilon}(t)\vert f(x_{\varepsilon}(t),u_{\varepsilon}(t))\vert,
\]
which yields, if $g_\varepsilon (t) > 0$,
\[
\dot{g}_{\varepsilon}(t) \leq -\frac{1}{\varepsilon} g_{\varepsilon}(t)+\gamma+\beta.
\]
The case $g_\varepsilon (t) = 0$ can be treated exactly as in the proof of \cite[Lemma 3.3]{ML}.
Then the result follows from Gronwall's lemma.
\end{proof}
\begin{proof}[Proof of Proposition \ref{compactnessepsilon}] The proof is divided into two steps. 
First, we assume that the final time $T$ is small enough, namely $0<T\le \theta$,
with
\begin{equation}\label{theta}
 \theta < \frac{\rho}{3(2\beta +\gamma)},
\end{equation}
where we recall that the constants $\rho$, $\beta$, and $\gamma$ appear in the standing assumptions $(H_{1.1})$, $(H_{1.2})$,
and $(H_{3.1})$. Second, the general case will be treated.

Assume now that $T \le \theta$ and let $\varepsilon_n \downarrow 0$ and a sequence $\{ u_n\}$ of admissible controls be given.
It is clear that a solution $x_n$ of \eqref{primeqeps}, with $\varepsilon_n$, resp. $u_n$, in place of $\varepsilon$, resp. $u$,
exists and is defined on its maximal interval of existence $[0,T_n]\subseteq [0,T]$ such that $d_{C(t)}(x_n(t))<\rho$ for all $t\in [0,T_n]$.
We have from Lemma \ref{Lemma3.3} that \eqref{geps} holds on $[0,T_n]$ and so the solution $x_n$ is unique by a standard application of Gronwall's lemma.
It is also easy to see, arguing as in \cite[Sect. 3]{ML}, that the maximal interval of existence must be the whole of $[0,T]$.
Moreover, we have for all $n$
\[
 \vert \dot{x}_{n}(t)-f(x_{n}(t),u_{n}(t))\vert=
\left\vert\frac{x_{n}(t)-\text{proj}_{C(t)}(x_{n}(t))}{\varepsilon_n}\right\vert=\frac{1}{\varepsilon_n}d_{C(t)}(x_{n})
\leq \gamma+\beta \quad\forall t\in [0,T],
\]
from which the conclusion on the Lipschitz constant of $\dot{x}_n$ follows immediately, since $f$ is uniformly bounded by $\beta$. 

Consider now the general case. From the preceding argument, for each $n$ there exists a solution $x_n$ such that,
for all  $t\in [0,\theta]$,
\begin{equation}\label{dxn}
d_{C(t)}(x_n(t)) \le \varepsilon_n (\beta + \gamma) . 
\end{equation}
For every $n$ large enough, we can assume that
\[
 \varepsilon_n (\beta + \gamma) + \theta < \frac{\rho}{3(2\beta +\gamma)}.
\]
Applying the argument used in the preceding step to the Cauchy problem
\[
\begin{cases}
\dot{x}(t) & = -\frac{1}{\varepsilon_n}\big(x(t)-{\rm proj}_{C(t)}(x(t))\big) + f(x(t),u_{n}(t)),
\\
x(0)&=x_n(\theta),
\end{cases}
\]
we can extend $x_n$, keeping the property \eqref{dxn}, up to the time $2\theta$. Since $\theta$ is independent of $n$, the interval
$[0,T]$ can be covered after finitely many steps.
\end{proof}
Our second result is concerned with compactness and passing to the limit for solutions of $(P_\varepsilon(u_\ast))$, as $\varepsilon \to 0$.
\begin{prop}\label{cowlambda}
Let $u_\ast$ be a global minimizer for the problem $(P)$, together with the corresponding solution $x_\ast$ of \eqref{probmin}. 
Let $(x_{\varepsilon},u_{\varepsilon})$ be an optimal couple for the regularised minimization problem $(P_\varepsilon(u_\ast))$.
Then there exists a sequence $\varepsilon_{n}\downarrow0$ such that
\[
\begin{split}
&x_{\varepsilon_{n}}\rightarrow x_{\ast} \text{ weakly in } W^{1,2}([0,T];\R^{n}),\\
&u_{\varepsilon_{n}}\rightarrow u_{\ast}\text{ strongly in }L^{2}([0,T];\R^{m}).
\end{split}
\]
\end{prop}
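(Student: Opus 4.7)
The plan is to extract convergent subsequences via compactness, identify the limit as a feasible trajectory of $(P)$, and then use the optimality of $(x_\ast,u_\ast)$ together with the additional quadratic penalty in $(P_\varepsilon(u_\ast))$ to force the limiting control to equal $u_\ast$ and to upgrade weak to strong convergence.

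First, Proposition \ref{compactnessepsilon} provides a uniform Lipschitz constant $\gamma+2\beta$ for the family $\{x_\varepsilon\}$ on $[0,T]$. By Arzelà–Ascoli there is $\varepsilon_n\downarrow 0$ with $x_{\varepsilon_n}\to x$ uniformly and $\dot x_{\varepsilon_n}\rightharpoonup \dot x$ weakly in $L^2([0,T];\R^n)$. Since $U$ is compact the controls $u_\varepsilon$ are uniformly bounded, so along a further subsequence $u_{\varepsilon_n}\rightharpoonup u$ weakly in $L^2([0,T];\R^m)$. Estimate \eqref{geps} and the continuity of $d_{C(t)}(\cdot)$ in both arguments (the latter from $(H_{1.2})$) give $d_{C(t)}(x(t))=0$, i.e.\ $x(t)\in C(t)$ for every $t\in[0,T]$.

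Next, I would show $x$ is a sweeping-process trajectory for some admissible control. The penalty term
\[
v_n(t):=\tfrac{1}{\varepsilon_n}\bigl(x_{\varepsilon_n}(t)-\text{proj}_{C(t)}(x_{\varepsilon_n}(t))\bigr)=f(x_{\varepsilon_n}(t),u_{\varepsilon_n}(t))-\dot x_{\varepsilon_n}(t)
\]
is bounded in $L^\infty$ by $\beta+\gamma$, hence weakly convergent in $L^2$ along a subsequence to some $v$. The convexity hypothesis $(H_{3.4})$ combined with Mazur's lemma and a standard Filippov-type measurable selection produces a measurable $\tilde u:[0,T]\to U$ with $f(x_{\varepsilon_n},u_{\varepsilon_n})\rightharpoonup f(x,\tilde u)$ in $L^2$. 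Since $-v_n(t)$ is a positive multiple of the outward unit normal to $C(t)$ at $\text{proj}_{C(t)}(x_{\varepsilon_n}(t))$, and both this base point and $x_{\varepsilon_n}(t)$ tend to $x(t)\in C(t)$, the graph closedness of the normal cone for $\rho$-prox-regular sets together with Mazur's lemma yields $v(t)\in N_{C(t)}(x(t))$ for a.e.\ $t$. Passing to the limit in \eqref{primeqeps} then shows that $x$ solves \eqref{probmin} with admissible control $\tilde u$.

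Finally, I compare costs. Let $y_n$ be the unique solution of \eqref{primeqeps} (with $\varepsilon_n$) corresponding to the control $u_\ast$ itself; the same arguments as in \cite{ML,thib} give $y_n\to x_\ast$ uniformly. Optimality of $u_{\varepsilon_n}$ for $(P_{\varepsilon_n}(u_\ast))$ gives
\begin{equation*}
h(x_{\varepsilon_n}(T))+\tfrac12\int_0^T|u_{\varepsilon_n}(t)-u_\ast(t)|^2\,dt\ \le\ h(y_n(T))\ \longrightarrow\ h(x_\ast(T)).
\end{equation*}
By continuity of $h$ and weak lower semicontinuity of $\|\cdot\|_{L^2}^2$,
\begin{equation*}
h(x(T))+\tfrac12\|u-u_\ast\|_{L^2}^2\ \le\ h(x_\ast(T)).
\end{equation*}
Since $x$ is a feasible trajectory for $(P)$ with control $\tilde u$, the global optimality of $(x_\ast,u_\ast)$ gives $h(x(T))\ge h(x_\ast(T))$, forcing $u=u_\ast$ a.e. Uniqueness for the sweeping process then gives $\tilde u=u_\ast$ and $x=x_\ast$, and the same sandwich shows $\tfrac12\|u_{\varepsilon_n}-u_\ast\|_{L^2}^2\to 0$, i.e.\ the strong $L^2$ convergence of the controls.

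The main obstacle is the second step: identifying the weak $L^2$-limit of the penalty terms as an element of $N_{C(t)}(x(t))$ a.e., since the convergence of $v_n$ is only weak and the normal cone is not continuous. The smoothness and prox-regularity in $(H_{1.1})$ make this standard, but it is the technical heart of the passage to the limit.
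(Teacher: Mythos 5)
Your proposal is correct and follows essentially the same route as the paper: uniform Lipschitz bounds and \eqref{geps} give compactness and feasibility of the limit; Mazur's lemma plus the prox-regularity inequality (equivalently, graph closedness of the normal cone) plus the Convergence/Filippov theorems identify the limit as an admissible pair for $(P)$; and the comparison with the solutions $y_n$ of \eqref{primeqeps} driven by $u_\ast$, combined with global optimality of $(x_\ast,u_\ast)$, kills the penalty term and yields strong $L^2$ convergence of the controls. The one genuine divergence is the final identification $x=x_\ast$: you invoke uniqueness of solutions of the perturbed sweeping process, whereas the paper runs a Gronwall estimate on $r_n=\tfrac12|x_n-x_\ast|^2$ using the hypomonotonicity of the normal cone, which has the added benefit of showing directly that $x_n\to x_\ast$ uniformly. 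Two small repairs to your write-up: the sandwich gives $\limsup_n\|u_{\varepsilon_n}-u_\ast\|_{L^2}^2\le 2\bigl(h(x_\ast(T))-h(x(T))\bigr)\le 0$ directly, so you should extract the strong convergence \emph{first}; and $u=u_\ast$ (weak limit of the controls) does not by itself give $\tilde u=u_\ast$, since $\tilde u$ is only tied to the weak limit $z$ of $f(x_{\varepsilon_n},u_{\varepsilon_n})$ and $f$ does not commute with weak limits --- once you have strong (hence a.e., along a subsequence) convergence of $u_{\varepsilon_n}$, you get $z=f(x,u_\ast)$ a.e., and only then does uniqueness give $x=x_\ast$.
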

\begin{proof} 
By Proposition \ref{compactnessepsilon} and assumptions $(H_2)$ and $(H_{3.1})$, we find a sequence $\varepsilon_{n}\downarrow0$ 
and an admissible control $\tilde{u}$ such that
\begin{equation}\label{conv}
\begin{split}
&x_{n}:=x_{\varepsilon_{n}} \text{ converges weakly in } W^{1,2}([0,T];\R^{n}) \text{ to some } x,\\
&u_{n}:=u_{\varepsilon_{n}} \text{ converges weakly in } L^{2}([0,T];\R^{n}) \text{ to } \tilde{u},\\
&\!\!\int_{0}^{T}|u_{n}(t)-u_{\ast}(t)|^{2\, }dt\quad \text{ converges to some }\quad \delta\geq 0,
\end{split}
\end{equation} 
and moreover \eqref{geps} holds for every $n$. Observe that \eqref{geps} implies in turn that $\text{proj}_{C(t)}(x_{n}(t))$ is well defined
and also
\begin{equation}\label{normal}
x_n(t) - \text{proj}_{C(t)}(x_{n}(t))\in N_{C(t)}\big( \text{proj}_{C(t)}(x_{n}(t)) \big)
\end{equation}
for each $t\in [0,T]$ and $n\in \N$.

Let us prove first that there exists an admissible control $\bar{u}$ such that 
\begin{equation}\label{admcont}
\dot{x}(t)\,\in \,  -N_{C(t)}(x(t)) + f(x(t),\bar{u}(t))\; \text{ a.e. on }\, [0,T].
\end{equation}
Indeed, from
\[
-\dot{x}_{n}(t) = \frac{x_{n}(t)-\text{proj}_{C(t)}(x_{n}(t))}{\varepsilon_n} - f(x_{n}(t),u_{n}(t))
\]
and \eqref{ineqphi}, \eqref{normal}, \eqref{geps} it follows immediately that
\begin{equation}\label{nonpos}
\langle -\dot{x}_{n}(t)+f(x_{n}(t),u_{n}(t)),y-\text{proj}_{C(t)}(x_{n}(t))\rangle \leq 
\frac{\gamma+\beta}{2\rho} |y-\text{proj}_{C(t)}(x_{n}(t))|^2  \quad \forall y \in C(t).
\end{equation}
First we see that the uniform convergence of $x_{n}$ to $x$ implies by passing to the limit in \eqref{geps}
that $x(t)\in C(t)$ for all $t\in [0,T]$. Furthermore, by possibly taking a subsequence
we may assume that $z_n:=f(x_{{n}},u_{{n}})$ converges weakly in $L^2([0,T];\R^n)$ to some $z$, and
by Mazur's lemma we can find a convex combination  $\sum_{k=n}^{r(n)}S_{k,n}(-\dot{x}_{k}+z_{k})$,
with $\sum_{k=n}^{r(n)}S_{k,n}=1$ and $S_{k,n}\in[0,1]$ for all $k,n$, which converges strongly in $L^{2}$ and pointwise a.e. to $-\dot{x}+z$.
Let now $t\in [0,T]$ and $y \in C(t)$. We have 
\[
\begin{split}
 \langle -\dot{x}(t)+z(t),y-x(t)\rangle & = \big\langle -\dot{x}(t)+z(t)- \sum_{k=n}^{r(n)}S_{k,n}(-\dot{x}_{k}(t)+z_{k}(t)),y-x(t)\big\rangle\\
&\qquad + \sum_{k=n}^{r(n)}S_{k,n} \langle -\dot{x}_{k}(t)+z_{k}(t),y-\text{proj}_{C(t)}(x_{k}(t))\rangle 
\\ 
 &\qquad +\sum_{k=n}^{r(n)}S_{k,n}  \langle -\dot{x}_{k}(t)+z_{k}(t),-x(t)+\text{proj}_{C(t)}(x_{k}(t))\rangle .
\end{split}
\]
The first and the third summands in the above expression tend to zero a.e. The second one, thanks to \eqref{nonpos}, satisfies the estimate
\[
 \sum_{k=n}^{r(n)}S_{k,n} \langle -\dot{x}_{k}(t)+z_{k}(t),y-\text{proj}_{C(t)}(x_{k}(t))\rangle \le 
\frac{\gamma+\beta}{2\rho} \sum_{k=n}^{r(n)}S_{k,n} |y-\text{proj}_{C(t)}(x_{k}(t))|^2.
\]
Thus, passing to the limit one obtains
\[
\langle -\dot{x}(t)+z(t),y-x(t)\rangle  \le \frac{\gamma+\beta}{2\rho} |y-x(t)|^2\qquad\forall y\in C(t).
\]
This proves that $\dot{x}(t)\in -N_{C(t)}(x(t)) + z(t)$ for a.e. $t\in [0,T]$. 
Since $f(x,U)$ is convex for all $x$, from the classical Convergence Theorem
(see, e.g., \cite[Theorem 1, p. 60]{AuCe}) it follows that $z(t)\in f(x(t),U)$ for a.e. $t$. It then follows from 
from Filippov's Selection Theorem (see, e.g., \cite[Th. 2.3.13]{vinter}) that there exists $\bar{u}(\cdot)$ such that $z(t)=f(x(t),\bar{u}(t))$.
This proves \eqref{admcont}.

We claim now that $(x,\bar{u})=(x_{\ast},u_{\ast})$.
To this aim, define $x_{\ast}^{n}$ to be the unique solution of the Cauchy problem
\[
\left\{ \begin{array}{l}
\dot{y}(t)\,= \,  -\frac{1}{\varepsilon_{n}}\big(y(t)-\text{proj}_{C(t)}(y(t))\big) + f(y(t),u_{\ast}(t)),
\\
y(0)=x_{0}\;,
\\
\end{array}\right.
\]
on $[0,T]$ and observe that $x_{\ast}^{n}$ converges weakly to $x_{\ast}$ in $W^{1,2}([0,T];\R^{n})$ (see \cite[Lemma 3.6]{ML}).
Since $(x_{n},u_{n})$ is an optimal couple (namely, a global minimizer) for $(P_\varepsilon(u_\ast))$, we have 
\begin{equation}\label{xnstar}
 h(x_{\ast}^{n}(T)) \geq h(x_{n}(T))+\frac12 \int_{0}^{T}|u_{n}(t)-u_{\ast}(t)|^{2}dt 
\end{equation}
for all $n\in \N$.
By passing to the limit in \eqref{xnstar}, using the weak lower semicontinuity of the integral together with 
our convergence properties \eqref{conv}, we obtain
\[
 h(x_{\ast}(T))  \geq h(x(T))+ \delta
\geq h(x(T))+ \int_{0}^{T}|\tilde{u}(t)-u_{\ast}(t)|^{2}dt.
\]
Since $x_\ast$ is a global minimizer for the problem $(P)$, the above inequalities
imply that $\delta=0$, i.e., $u_{n}\rightarrow u_{\ast}$ strongly in $L^{2}([0,T];\R^{n})$.

Now, the strong convergence of $u_{n}$ to $u_{\ast}$ allows us to prove that $x_{n}\rightarrow x_{\ast}$ weakly in $W^{1,2}([0,T];\R^{n})$.
Indeed, set $r_{n}=\frac{1}{2}|x_{n}(t)-x_{\ast}(t)|^{2}$. Then, by using the fact that 
$-\dot{x}_n(t) + f(x_n(t),u_n(t))=\frac{1}{\varepsilon_{n}}\big(x_{n}(t)-\text{proj}_{C(t)}(x_n(t))\big)\in N_{C(t)}(x_{n}(t))$
for all $t\in [0,T], n\in\N$ together with the (hypo)monotonicity of the normal cone to $C(t)$ -- which follows immediately
from \eqref{ineqphi} -- and $(H_{3.3})$, \eqref{boundxdot}, we obtain, for all $t\in [0,T]$ and each $n\in \N$ large enough,
\[
\begin{split}
 \dot{r}_{n}(t)& = \langle -\dot{x}_{n}(t)+\dot{x}_{\ast}(t), - x_{n}(t)+x_{\ast}(t) \rangle   
\\
 &\leq - \langle f(x_{n}(t),u_{n}(t))-f(x_{\ast}(t),u_{\ast}(t)), x_{n}(t)-x_{\ast}(t) \rangle + \frac{2\gamma + 3\beta}{\rho} r_n(t)
 \\
 & \leq  K r_{n}(t)+k|x_{n}(t)-x_{\ast}(t)|\,  |u_{n}(t)-u_{\ast}(t)|,
\end{split}
\]
where $K := k +\frac{2\gamma + 3\beta}{\rho} $.
Since $r_{n}(0)=0$ for each $n \in \N$, Gronwall's Lemma yields, for each $t\in [0,T]$,
\[
r_{n}(t)\leq k\int_{0}^{T} |x_{n}(t)-x_{\ast}(t)|\, |u_{n}(t)-u_{\ast}(t)| e^{K(T-t)}dt,
\]  
which, by the strong convergence of $u_{n}$ to $u_{\ast}$, implies that $x(t)=x_{\ast}(t)$ for all $t\in [0,T]$.
\end{proof}
\begin{rem}
Observe that a similar argument implies that the sequence $\lbrace x_{n}\rbrace$ is Cauchy for the uniform convergence.
\end{rem}

\section{Monotonicity of the distance} \label{MONOD}
Let $x_\ast$ be a given optimal trajectory, and $x_n$
be trajectories of the regularized dynamics, and recall that $I_{\partial}:= \lbrace t\in [0,T]:x_{\ast}(t)\in  \partial C(t)\rbrace$.
The first result in this section will be crucial in order to allow some estimates involving $\nabla_{x}d_{C(t)}(x_n(t))$ 
and $\nabla_{x}^{2}d_{C(t)}(x_n(t))$ by forbidding that $x_n(t)$ remains on $\partial C(t)$ 
on a set of times with positive measure. Recall that $\partial C(t)$ is the discontinuity set of $\nabla_{x}d_{C(t)}(x)$
and $\nabla_{x}^{2}d_{C(t)}(x)$ as a function of $x$.
The simplest way to satisfy this requirement is giving sufficient conditions in order to let 
$\frac{d}{dt}d_S(t,x_n(t))$ be nonzero for a.e. $t$ in a suitable neighborhood of $I_{\partial}$.
\begin{prop}\label{monot}
Assume $(H_1)$, \ldots, $(H_4)$, and let the admissible control $u_\ast$, with the corresponding solution $x_\ast$ of \eqref{probmin},
be such that that there exists $\sigma >0$ for which either \eqref{m1} or \eqref{m2} hold.
Let $\varepsilon_{n}\downarrow 0$ and let $u_n$ be admissible controls such that $u_{{n}}\rightarrow u_{\ast}$ 
strongly in $L^{2}([0,T];\R^{m})$ and the corresponding solutions of \eqref{primeqeps} $x_n\rightarrow x_{\ast}$ strongly in $W^{1,2}([0,T];\R^{n})$. 
Then for each $n$ large enough there exists at most one time $t_{n}\in [0,T]$ such that $x_n(t_{n})\in  \partial C(t_{n})$.
\end{prop}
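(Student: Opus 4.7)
The plan is to set $\varphi_n(t) := d_S(t,x_n(t))$ and show that, for $n$ large, $\varphi_n$ has at most one zero on $[0,T]$. The strategy splits $[0,T]$ into a ``far'' region where $\varphi_n$ is bounded away from zero, and a ``near'' region around $I_\partial$ where a one-sided bound on $\dot\varphi_n$ forces strict monotonicity. Since $x_n \to x_\ast$ uniformly on $[0,T]$ (by strong $W^{1,2}$ convergence), for any $\eta>0$ small the set $V_\eta := \{t \in [0,T] : d_S(t,x_\ast(t)) > -\eta\}$ is a relatively open neighborhood of $I_\partial$ in $[0,T]$, and $\varphi_n(t) \leq -\eta/2$ on $[0,T]\setminus V_\eta$ for all $n$ large; hence every touch point of $x_n$ with $\partial C$ lies in $V_\eta$.

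The derivative is computed from \eqref{primeqeps} using the identity $x_n(t)-\text{proj}_{C(t)}(x_n(t)) = \varphi_n^+(t)\,\nabla_x d_S(t,x_n(t))$, giving a.e.
\[
\dot\varphi_n(t) = \frac{\partial d_S}{\partial t}(t,x_n(t)) + \langle \nabla_x d_S(t,x_n(t)),\, f(x_n(t),u_n(t))\rangle - \frac{\varphi_n^+(t)}{\varepsilon_n}.
\]
The $C^3$-regularity of $\partial C(t)$ together with the Lipschitz dependence on $t$ makes $\partial_t d_S$ and $\nabla_x d_S$ jointly continuous on a tubular neighborhood of the moving boundary. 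Combined with the continuity of $f$, the compactness of $U$, and the uniform convergence $x_n \to x_\ast$, the pointwise a.e. hypothesis \eqref{m1} (resp. \eqref{m2}) upgrades, for $\eta$ small and $n$ large, to
\[
\frac{\partial d_S}{\partial t}(t,x_n(t)) + \langle \nabla_x d_S(t,x_n(t)),\, f(x_n(t),u)\rangle \geq \gamma + \beta + \tfrac{\sigma}{2} \quad (\text{resp. } \leq -\tfrac{\sigma}{2})
\]
for every $u \in U$ and a.e. $t \in V_\eta$. Specializing to $u=u_n(t)$ and invoking the bound $\varphi_n^+(t)/\varepsilon_n \leq \beta+\gamma$ from Lemma \ref{Lemma3.3}, one obtains $\dot\varphi_n(t) \geq \sigma/2 > 0$ (resp.\ $\leq -\sigma/2 < 0$) a.e. on $V_\eta$. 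Thus $\varphi_n$ is strictly monotone on each connected component of $V_\eta$.

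To conclude, suppose by contradiction there are two touch points $t_1<t_2$; by localization both lie in $V_\eta$ and, by strict monotonicity, they must belong to different components of $V_\eta$. Under \eqref{m1}, the normal-cone multiplier for $x_\ast$ on $I_\partial$ equals the left-hand side of \eqref{m1} with $u=u_\ast$ and is therefore $\geq \gamma+\beta+\sigma>0$, so $x_\ast$ cannot leave $\partial C$ once it touches; this forces $I_\partial$ to be a closed sub-interval terminating at $T$, and hence $V_\eta$ is a single interval for $\eta$ small, contradicting the two-component situation. Under \eqref{m2}, an endpoint-matching argument shows that interior and rightmost components of $V_\eta$ satisfy $\varphi_n<0$ throughout (by strict decrease together with boundary values $\leq -\eta/2$), so at most one touch can occur, located in the leftmost component and only at $t=0$.

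The principal difficulty lies in the transfer step: upgrading the hypothesis from $(t,x_\ast(t))$ with $t\in I_\partial$ to $(t,x_n(t))$ with $t\in V_\eta$, uniformly in $u\in U$. This requires joint regularity in $(t,x)$ of both $\partial_t d_S$ and $\nabla_x d_S$ in a tubular neighborhood of the moving boundary, carefully extracted from $(H_{1.1})$--$(H_{1.2})$, combined with the uniform convergence $x_n\to x_\ast$; the remaining ingredients are then routine.
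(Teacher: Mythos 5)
Your proof is correct and follows essentially the same strategy as the paper's: both show that $t\mapsto d_S(t,x_n(t))$ is strictly monotone near the touching times by combining the hypothesis \eqref{m1} (resp.\ \eqref{m2}) with the penalty bound $d_{C(t)}(x_n(t))/\varepsilon_n\le\beta+\gamma$ from Lemma \ref{Lemma3.3}, and rule out touches elsewhere by uniform convergence. Your derivative computation is in fact cleaner than the paper's: since $x_n(t)-\mathrm{proj}_{C(t)}(x_n(t))=d_{C(t)}(x_n(t))\,\nabla_x d_S(t,x_n(t))$, the pairing $\langle\nabla_x d_S(t,x_n(t)),\,x_n(t)-\mathrm{proj}_{C(t)}(x_n(t))\rangle=d_{C(t)}(x_n(t))$ is exact, whereas the paper expands $\nabla_x d_S$ to second order around the projection and must absorb an extra $O(d_{C(t)}(x_n(t)))$ error. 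Two points need patching. First, your localization is purely spatial, while \eqref{m1}--\eqref{m2} are hypotheses only on $t\in I_\partial$: a time $t$ far from $I_\partial$ at which $x_\ast(t)$ happens to lie within $\eta$ of $\partial C(t)$ belongs to $V_\eta$, and no continuity argument transfers the inequality there. You should first fix a temporal neighborhood $I_\partial+(-\delta,\delta)$ on which the perturbed inequality holds (this is the paper's step \eqref{m1xy}, and it is where the delicate $t$-dependence of $\partial_t d_S$ is hidden in both proofs), and then shrink $\eta$ so that $V_\eta\subseteq I_\partial+(-\delta,\delta)$, which is possible because $d_S(\cdot,x_\ast(\cdot))$ is continuous and strictly negative on the compact complement of that neighborhood. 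Second, under \eqref{m1} the connectedness of $V_\eta$ does not follow merely from $I_\partial$ being an interval (a decreasing family of open neighborhoods of an interval can all be disconnected); it follows from the strict increase of $d_S(\cdot,x_\ast(\cdot))$ just to the left of $\inf I_\partial$, obtained by applying the same upgraded inequality to $x_\ast$ itself — or you can bypass connectedness entirely by the endpoint argument you already use for \eqref{m2}: a zero of $\varphi_n$ in a component $(a,b)$ of $V_\eta$ with $b<T$ would force $\varphi_n(b)\ge 0$, contradicting $\varphi_n(b)\le-\eta/2$. With these repairs the argument is complete and coincides in substance with the paper's.
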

\begin{proof}
We write the proof for the case \eqref{m1}, the case \eqref{m2} being similar and easier.

Assume (\ref{m1}) and let $\delta > 0$ be such that for a.e. $t\in [0,T]$ with $ d(t,I_{\partial}) < \delta$ 
and all $x, y \in \R^{n} $ with $\vert x-x_{\ast}(t)\vert$, $\vert y-y_{\ast}(t)\vert$ , $u\in U$ we have
\begin{equation}\label{m1xy}
\frac{\partial d_S}{\partial t}(t,x)+\langle \nabla_{x}d_S(t,y), f(x,u) \rangle-(\gamma+\beta)\geq \frac{\sigma}{2} .
\end{equation}
Let $y_{n}(t)$ be the projection of $x_n(t)$ onto $C(t)$. Then, for a.e. $t \in [0,T]$ we have
\begin{equation}  \label{ineqg} 
\begin{split}
 \frac{d}{dt}d_S(t,x_n(t)) &= \frac{\partial d_S}{\partial t}(t,x_n(t))
+\langle \nabla_{x}d_S(t,x_n(t)), \dot{x}_{\varepsilon_{n}}(t) \rangle   
\\
 & = \frac{\partial d_S}{\partial t}(t,x_n(t))+ \Big\langle \nabla_{x}d_S(t,y_{n}(t))
+\nabla_{x}^{2}d_S(t,y_{n}(t))(x_n(t)-y_{n}(t))
 \\
&  \quad +\sum_{\vert \alpha \vert = 2 , \alpha \in \N^{n}}\int_{0}^{1}(1-\tau)\partial_{x}^{\alpha}
\nabla_{x}d_S(t,y_{n}(t)+\tau(x_n(t)-y_{n}(t))(x_n(t)-y_{n}(t))^{\alpha}\, d\tau ,\\
& \qquad \qquad -\frac{x_n(t)-y_{n}(t)}{\varepsilon_{n}}+ f(x_n(t),u_n(t))\Big\rangle ,
\end{split}
\end{equation}
where $\alpha$ is the multiindex $(\alpha_1,\ldots ,\alpha_n)\in\mathbb{N}^n$, 
$\partial^\alpha_x = \frac{\partial^{\alpha_1}}{\partial x_1^{\alpha_1}}\ldots 
\frac{\partial^{\alpha_n}}{\partial x_n^{\alpha_n}}$, and $|\alpha|$ denotes the sum of all entries of $\alpha$.
Observe that in the above expression all summands involving higher order partial derivatives of $d_S$
vanish if $x_n(t)\in C(t)$, since in this case $y_n(t)=x_n(t)$.

Thanks to \eqref{geps}, for all $n$ large enough we have 
\begin{center}
$ d_{C(t)}(x_n(t))\leq \varepsilon_{n}(\gamma +\beta )< \frac{\delta}{2}$.
 \end{center}
Therefore, for all  $t\in I_{ \partial } + (- \delta , \delta )$, we obtain from \eqref{ineqg} that
\[
 \frac{d}{dt}d_S(t,x_n(t)) \geq \frac{\partial d_S}{\partial t}(t,x_n(t))
-(\gamma+\beta)+\langle \nabla_{x}d_S(t,y_{n}(t)), f(x_n(t),u_n(t)) \rangle - K d_{C(t)}(x_n(t))  ,
\]
where $K$ is independent of $t$ and $n$.
Since $x_n$ converges uniformly to $x_\ast$, we obtain from \eqref{m1xy} that $\frac{d}{dt}d_S(t,x_n(t))> 0$ 
for a.e. $t\in I_{ \partial } + (- \delta , \delta )$, for all $n$ large enough,
whence there exists at most one $t \in [0,T] \setminus (I_{ \partial } + (- \delta , \delta ))$ such that 
$x_n(t)\in  \partial C(t)$. Since in $[0,T]\setminus (I_{ \partial } 
+ (- \delta , \delta ))$ the trajectory $x_n(t)$ belongs to ${\rm int}\, C(t)$ for all $n$ large enough,
no further crossings of $\partial C(t)$ are possible.

The second case is analogous and actually easier. In fact there will not be any crossing of $\partial C(t)$ on $(0,T]$, 
for all $n$ large enough, since on a suitable neighborhood of $I_{\partial}$ we will have $\frac{d}{dt}d_S(t,x_n(t))< 0$. 
\end{proof}
The following simple corollaries will be useful in the discussion of necessary conditions. 
\begin{prop}\label{Im1}
Assume \eqref{m1}. Then $I_{\partial}$ is an interval and, if it is nonempty, $\sup I_{\partial}=T$.
\end{prop}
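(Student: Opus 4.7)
The plan rests on the observation that the proof of Proposition~\ref{monot} under \eqref{m1} actually delivers more than its stated conclusion: it produces $\delta>0$ and $n_0\in\mathbb{N}$ such that for all $n\ge n_0$ the absolutely continuous map $\psi_n(t):=d_S(t,x_n(t))$ satisfies a \emph{uniform} positive lower bound $\dot\psi_n(t)\ge\sigma/4$ for a.e.\ $t\in N_\delta:=I_\partial+(-\delta,\delta)$. Indeed, the estimate derived there for $\frac{d}{dt}d_S(t,x_n(t))$ is $\ge \sigma/2-K\,d_{C(t)}(x_n(t))$, and by \eqref{geps} the correction term is $O(\varepsilon_n)$. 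In particular, $\psi_n$ is strictly increasing on every connected component of $N_\delta$ once $n$ is large enough.

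Next I would pass to the limit. By Proposition~\ref{cowlambda} (and the remark following it) $x_n\to x_\ast$ uniformly on $[0,T]$, and the joint Lipschitz continuity of $d_S$ in a neighborhood of $\partial C(\cdot)$ gives $\psi_n\to\psi:=d_S(\cdot,x_\ast(\cdot))$ uniformly. Consequently $\psi$ is nondecreasing on each connected component of $N_\delta$.

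The two conclusions of the proposition follow essentially by hand, using that $\psi$ is continuous on $[0,T]$, that $\psi\le 0$ there, and that $I_\partial=\{\psi=0\}$ is closed. Suppose $I_\partial\ne\emptyset$ and set $\tau_0:=\sup I_\partial\in I_\partial$. If $\tau_0<T$, pick any $\eta\in(0,\min(\delta,T-\tau_0))$: the segment $[\tau_0,\tau_0+\eta]$ lies in the connected component of $N_\delta$ containing $\tau_0$, so by the monotonicity above, $0=\psi(\tau_0)\le\psi(\tau_0+\eta)\le 0$, hence $\tau_0+\eta\in I_\partial$, contradicting the definition of $\tau_0$. This gives $\sup I_\partial=T$. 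The same local argument applied to an arbitrary $a\in I_\partial$ with $a<T$ rules out the existence of an immediate right-gap $a+\eta\notin I_\partial$ for $\eta\in(0,\delta)$; combined with the closedness of $I_\partial$ and $\sup I_\partial=T$, this forces $I_\partial=[\inf I_\partial,T]$, i.e., an interval.

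The only delicate step is the first one: it is essential to read off from the proof of Proposition~\ref{monot} that the positivity of $\dot\psi_n$ is uniform in $n$ on a \emph{fixed} neighborhood $N_\delta$ (rather than merely pointwise, or on a shrinking neighborhood). Once this uniformity is in hand, the remainder is a routine limit-and-continuity argument, and the analogous statement for \eqref{m2} is not needed here.
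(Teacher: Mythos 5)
Your argument is correct, but it takes a genuinely different route from the paper's. The paper proves the proposition directly on the optimal trajectory, with no recourse to the regularized problems: assuming $\sup I_\partial=\bar t<T$, it writes $d_S(s,x_\ast(s))-d_S(\bar t,x_\ast(\bar t))$ as the integral of $\frac{\partial d_S}{\partial t}(\tau,x_\ast(\tau))+\langle\nabla_x d_S(\tau,x_\ast(\tau)),\dot x_\ast(\tau)\rangle$, observes that $\dot x_\ast(\tau)=f(x_\ast(\tau),u_\ast(\tau))$ a.e.\ on the set where $x_\ast(\tau)\in\mathrm{int}\,C(\tau)$ (the normal cone contribution vanishes there), and invokes \eqref{m1} extended by continuity to a neighborhood of $I_\partial$ to make the integrand positive for $s$ near $\bar t$, contradicting $d_S(s,x_\ast(s))<0$. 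This is lighter than your proof: it needs neither Proposition \ref{cowlambda} nor Proposition \ref{monot}, only the continuity extension \eqref{m1xy} of the pointing condition. Your detour through the approximations $x_n$ is nonetheless sound: the proof of Proposition \ref{monot} does yield the uniform bound $\frac{d}{dt}d_S(t,x_n(t))\ge\sigma/2-K\varepsilon_n(\beta+\gamma)\ge\sigma/4$ a.e.\ on the fixed neighborhood $I_\partial+(-\delta,\delta)$ for $n$ large, the $1$-Lipschitz continuity of $d_S(t,\cdot)$ together with the uniform convergence $x_n\to x_\ast$ passes the monotonicity to $\psi=d_S(\cdot,x_\ast(\cdot))$, and the endgame using $\psi\le0$, $I_\partial=\{\psi=0\}$ and closedness is complete (your iteration of the local step to get $[a,T]\subset I_\partial$ for every $a\in I_\partial$ is the right way to conclude intervalness, which the paper treats somewhat informally). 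What your approach buys is a slightly stronger byproduct — $\psi$ is nondecreasing on a whole neighborhood of $I_\partial$ — at the cost of a heavier logical dependence: the paper keeps Propositions \ref{Im1} and \ref{Im2} independent of the approximation machinery, which matches how they are used later.
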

\begin{proof}
It is enough to show that if $t\in I_{\partial}$, then $[t,T] \subset I_{\partial}$. 
To this aim, assume by contradiction that there exists $t<T$ such that $t\in I_{\partial}$, but $\bar{t}:= \sup I_{\partial}< T$ . 
This means, in particular, that for all $s\in (\bar{t},T]$ we have $ d_S(s,x_{\ast}(s))< 0$ .
Thus, for all such $s$ we have  
\[
\begin{split}
 0> d_S(s,x_{\ast}(s))- d_S(\bar{t},x_{\ast}(\bar{t}))& = \int_{\bar{t}}^{s}\Big( \frac{\partial d_S}{\partial t}
(\tau,x_{\ast}(s))+\langle \nabla_{x}d_S(\tau,x_{\ast}(\tau)), \dot{x}_{\ast}(\tau) \rangle\Big)\, d\tau   
\\
 & = \int_{\bar{t}}^{s}\Big( \frac{\partial d_S}{\partial t}(\tau,x_{\ast}(s))+\langle \nabla_{x}d_S(\tau,x_{\ast}(\tau)), 
f(x_{\ast}(\tau),u_{\ast}(\tau))\rangle \Big)\, d\tau
 \end{split}
\]
and the integrand is positive if $s$ is close enough to $\bar{t}$, a contradiction.
\end{proof}
\begin{prop}\label{Im2}
Assume \eqref{m2}. Then $I_{\partial}$ is at most the singleton $\lbrace 0 \rbrace$ . 
\end{prop}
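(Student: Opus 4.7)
The plan is to argue by contradiction: if $I_\partial$ contained some $t_0 > 0$, the Lipschitz function $\phi(t) := d_S(t, x_\ast(t))$---which is everywhere $\le 0$ on $[0,T]$ and vanishes precisely on $I_\partial$---would be forced to be strictly decreasing on a left neighbourhood of $t_0$, producing $\phi(s) > 0$ for $s$ just below $t_0$, in contradiction with $x_\ast(s) \in C(s)$.

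To make this precise, I would first exploit the regularity of $d_S$ near $\partial C(t_0)$: under $(H_1)$, $d_S(t,\cdot)$ is of class $C^2$ on a fixed tubular neighbourhood of $\partial C(t)$ and $d_S(\cdot,x)$ is Lipschitz, so $\nabla_x d_S$ and $\partial_t d_S$ are continuous at $(t_0, x_\ast(t_0))$. Combined with the continuity of $x_\ast$ and $f$ and the compactness of $U$, hypothesis \eqref{m2} extends from $I_\partial$ to a two-sided neighbourhood of $t_0$: there exists $\delta > 0$ such that
\begin{equation*}
\frac{\partial d_S}{\partial t}(t, x_\ast(t)) + \langle \nabla_x d_S(t, x_\ast(t)), f(x_\ast(t), u) \rangle \le -\frac{\sigma}{2}
\end{equation*}
for every $u \in U$ and a.e. $t \in (t_0 - \delta, t_0 + \delta) \cap [0,T]$.

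Next, on this same neighbourhood I would compute $\dot{\phi}$ by the chain rule: writing $\dot{x}_\ast(t) = f(x_\ast(t), u_\ast(t)) - v(t)$ with $v(t) \in N_{C(t)}(x_\ast(t))$,
\begin{equation*}
\dot{\phi}(t) = \frac{\partial d_S}{\partial t}(t, x_\ast(t)) + \langle \nabla_x d_S(t, x_\ast(t)), f(x_\ast(t), u_\ast(t)) \rangle - \langle \nabla_x d_S(t, x_\ast(t)), v(t) \rangle
\end{equation*}
for a.e.\ $t$ in the neighbourhood. When $t \notin I_\partial$ one has $v(t) = 0$; when $t \in I_\partial$, $v(t)$ is a nonnegative multiple of $n(t, x_\ast(t)) = \nabla_x d_S(t, x_\ast(t))$, so $\langle \nabla_x d_S, v\rangle \ge 0$ in either case. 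Combining this with the extended bound yields $\dot{\phi}(t) \le -\sigma/2$ for a.e.\ $t \in (t_0 - \delta, t_0)$, and integrating over $[s, t_0]$ for $s \in (\max\{0, t_0 - \delta\}, t_0)$ gives $\phi(t_0) - \phi(s) \le -\tfrac{\sigma}{2}(t_0 - s) < 0$. But $\phi(t_0) = 0$ and $\phi(s) \le 0$ force $\phi(t_0) - \phi(s) \ge 0$, a contradiction. Thus $I_\partial \subseteq \{0\}$.

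The main technical point is to justify the chain-rule identity for $\dot{\phi}$ a.e.\ and the joint continuity of $\partial_t d_S$ and $\nabla_x d_S$ at $(t_0, x_\ast(t_0))$; these are precisely the ingredients already deployed in the proof of Proposition \ref{monot}, so they can be reused with only cosmetic changes.
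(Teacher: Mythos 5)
Your argument is correct and is essentially the paper's own proof: both proceed by contradiction from a point $\bar t>0$ of $I_\partial$, write $d_S(\bar t,x_\ast(\bar t))-d_S(s,x_\ast(s))$ as the integral of $\tfrac{d}{dt}d_S(t,x_\ast(t))$, split $\dot x_\ast$ into $f$ plus a term in $-N_{C(t)}(x_\ast(t))$ whose contribution is nonpositive against $\nabla_x d_S$, and use the continuity of the data to propagate \eqref{m2} to a left neighbourhood of $\bar t$, contradicting $d_S(\bar t,x_\ast(\bar t))=0\ge d_S(s,x_\ast(s))$. The only difference is cosmetic (you make the uniform bound $-\sigma/2$ explicit where the paper just says the integrand is negative near $\bar t$).
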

\begin{proof}
Assume by contradiction that there exists $\bar{t}> 0$, with $\bar{t}\in I_{\partial}$. Then, for all $t<\bar{t}$ we have, on one hand,
\[
d_S(\bar{t},x_{\ast}(\bar{t}))- d_S(t,x_{\ast}(t))\geq 0,
\] 
while on the other, 
\[
\begin{split}
d_S(\bar{t},x_{\ast}(\bar{t}))- d_S(t,x_{\ast}(t))& = \int_{t}^{\bar{t}}\Big( \frac{\partial d_S}{\partial t}(\tau,x_{\ast}(s))
+\langle \nabla_{x}d_S(\tau,x_{\ast}(\tau)), \dot{x}_{\ast}(\tau) \rangle\Big)\, d\tau   
\\
 & = \int_{t}^{\bar{t}}\Big( \frac{\partial d_S}{\partial t}(\tau,x_{\ast}(s))+\langle 
\nabla_{x}d_S(\tau,x_{\ast}(\tau)),\dot{x}_{\ast}(\tau)- f(x_{\ast}(\tau),u_{\ast}(\tau))\rangle 
 \\
 & \qquad + \langle \nabla_{x}d_S(\tau,x_{\ast}(\tau)), f(x_{\ast}(\tau),u_{\ast}(\tau))\rangle \Big)\, d\tau
 \\
 \end{split}
\]
Observe that if $x_{\ast}(\tau) \in {\rm int}\, C(\tau)$, then $ \dot{x}_{\ast}(\tau)- f(x_{\ast}(\tau),u_{\ast}(\tau)) = 0$, 
while if $x_{\ast}(\tau) \in \partial C(\tau)$, then 
$\dot{x}_{\ast}(\tau)- f(x_{\ast}(\tau),u_{\ast}(\tau))= -\delta (\tau) \nabla_{x}d_S(\tau,x_{\ast}(\tau))$
for a bounded nonnegative function $\delta$. Therefore, \eqref{m2} implies that the integrand is $< 0$, provided $t$ 
is close enough to $\bar{t}$, yielding a contradiction.  
\end{proof}

\section{The Approximate Control Problem}\label{ACP}
Given a global minimizer $u_\ast$ of the problem $(P)$ and $\varepsilon >0$, we recall that in Section \ref{Dynamic}
the approximate problem $(P_\varepsilon (u_\ast))$ was defined and studied. 

Let $u_\varepsilon$ be a global minimizer for $(P_\varepsilon (u_\ast))$.
By Proposition \ref{cowlambda}, we know that, up to a subsequence, $u_\varepsilon$ converges weakly in $L^2([0,T];\R^m)$ to $u_\ast$ and
$x_\varepsilon$ converges strongly in $W^{1,2}([0,T];\R^n)$ to the optimal trajectory $x_\ast$, namely the trajectory of \eqref{probmin}
where we replace $u$ with $u_\ast$.

In order to state necessary conditions satisfied by $(x_{\varepsilon},u_{\varepsilon})$, we recall that the 
subdifferentiability of the Lipschitz map 
\begin{center}
$x\mapsto x-\text{proj}_{C(t)}(x) := P(t,x)$
\end{center}
was preliminarly discussed in Section \ref{prelim}. 
Here we recall only that under our standing assumptions the unit external normal to $C(t)$ at $x\in\partial C(t)$ is $\nabla_x d_S(t,x)$.

Differently from classical computations in control theory, we will not consider needle variations, but rather compute a 
directional derivative of the cost, following \cite{BPK}.
More precisely, let $(x_{\varepsilon},u_{\varepsilon})$ be an optimal pair for the problem \eqref{probeps} 
subject to \eqref{primeqeps} and let $\tilde{u}$ be an admissible control. For $\sigma \in [0,1]$ set $u_{\sigma}(t)=
u_{\varepsilon}(t)+\sigma(\tilde{u}(t)-u_{\varepsilon}(t))$ and observe that, thanks to the convexity of the control set $U$, 
this is an admissible control as well. Set $x_{\sigma}$ to be the corresponding solution of \eqref{primeqeps}.
We wish to compute the directional derivative of the cost $ J(x,u;u_{\ast})$ at $(x_{\varepsilon},u_{\varepsilon})$ 
in the direction $\tilde{u}-u_{\varepsilon}$. Should this derivative exist, then it would be $\geq 0$, 
by the minimality of $(x_{\varepsilon},u_{\varepsilon})$, namely  
\begin{equation*}
 \lim_{\sigma \rightarrow 0}\frac{ J(x_{\sigma},u_{\sigma};u_{\ast})- J(x_{\varepsilon},u_{\varepsilon};u_{\ast})}{\sigma}\geq 0.
\end{equation*}
The difference quotient in the above expression consists of the summands 
\[
\frac{h(x_{\sigma}(T))-h(x_{\varepsilon}(T))}{\sigma}\; +\;  
\frac{1}{2\sigma}\int_{0}^{T}(|u_{\sigma}(t)-u_{\ast}(t)|^{2}-|u_{\varepsilon}(t)-u_{\ast}(t)|^{2})\, dt.
\]
The limit of the second summand for $\sigma \to 0$ is straightforward and equals 
\[
 \int_{0}^{T}\langle \tilde{u}(t)-u_{\varepsilon}(t),u_{\varepsilon}-u_{\ast}(t)\rangle\, dt
\]
The limit of the first summand is 
\begin{equation*}
\lim_{\sigma \rightarrow 0} \left(\Big\langle \nabla h(x_{\varepsilon}(T)), \frac {x_{\sigma}(T)- 
x_{\varepsilon}(T)}{\sigma} \Big\rangle + \frac {o\big(x_{\sigma}(T)- x_{\varepsilon}(T)\big)}{\sigma}\right).
\end{equation*}
Therefore, we are lead to compute the limit 
\[
 \lim_{\sigma \rightarrow 0}\frac{ x_{\sigma}(T)-x_{\varepsilon}(T)}{\sigma}.
\]
This is classical (see, e.g., \cite[Theorem 3.4]{ABBP}) under the assumption of continuous differentiability with respect of both $x$ and $u$ 
of the right hand side of \eqref{primeqeps} for a.e. $t$. Such assumptions are valid in our setting thanks to Proposition \ref{monot}, since
there exists at most one time $t$ such that $x_\varepsilon (t)$ belongs to $\partial C(t)$.
Therefore, thanks to Theorem 3.4 in \cite{ABBP} we have 
\begin{equation}\label{gradh}
\lim_{\sigma \rightarrow 0}\frac{ x_{\sigma}(T)-x_{\varepsilon}(T)}{\sigma}= \big\langle \nabla h(x_{\varepsilon}(T)), 
\int_{0}^{T}M(T,t) \nabla_{u}f(x_{\varepsilon}(t),u_{\varepsilon}(t)) (\tilde{u}(t)-u_{\varepsilon}(t))\, dt\big\rangle  ,
\end{equation}
where $M(T,t)$ is the fundamental matrix solution of the linear O.D.E.
\[
\dot{v}(t) 
  = \Big(\frac{-1}{2\varepsilon}\nabla_{x}^{2}d^{2}(x_{\varepsilon}(t),C(t))+\nabla_{x} f(x_{\varepsilon}(t),u_{\varepsilon}(t))\Big) v(t)
\]
We can write the right hand side of \eqref{gradh} as 
\[
 \int_{0}^{T} \big\langle M^{T}(T,t) \nabla h(x_{\varepsilon}(T)), 
\nabla_{u}f(x_{\varepsilon}(t),u_{\varepsilon}(t)) (\tilde{u}(t)-u_{\varepsilon}(t)) \big\rangle \, dt  .
\]
By setting $p_{\varepsilon}(t)$ to be the solution of the adjoint equation 
\begin{equation}\label{adjeps}
\begin{cases}
-\dot{p}_{\varepsilon}(t)&= \big(\frac{-1}{2\varepsilon}\nabla_{x}^{2}d^{2}(x_{\varepsilon}(t),C(t))
+\nabla_{x} f(x_{\varepsilon}(t),u_{\varepsilon}(t))\big)p_{\varepsilon}(t)  ,\quad t\in [0,T]\\
-p_{\varepsilon}(T)&=\nabla h(x_{\varepsilon}(T)) ,
\end{cases}
\end{equation}
the sign condition on the directional derivative becomes, for all feasible $\tilde{u}$, 
\[
\int_{0}^{T}  \big( \langle -p_{\varepsilon}(t), \nabla_{u}f(x_{\varepsilon}(t),u_{\varepsilon}(t)) (\tilde{u}(t)-u_{\varepsilon}(t)) \rangle 
+ \langle u_{\varepsilon}(t)-u_{\ast}(t),\tilde{u}(t)-u_{\varepsilon}(t) \rangle \big)\, dt \geq 0.
\]
Since $\tilde{u}$ is an arbitrary measurable selection from $U$, we obtain
\begin{multline}\label{PMPeps} 
\langle p_{\varepsilon}(t), \nabla_{u}f(x_{\varepsilon}(t),u_{\varepsilon}(t)) u_{\varepsilon}(t) \rangle 
- \langle u_{\varepsilon}(t)-u_{\ast}(t),u_{\varepsilon}(t) \rangle  =\\
 \max_{u\in U}\{ \langle p_{\varepsilon}(t), 
\nabla_{u}f(x_{\varepsilon}(t),u_{\varepsilon}(t))u \rangle - \langle u_{\varepsilon}(t)-u_{\ast}(t),u \rangle\}\qquad 
 \text{for a.e. } t\in [0,T] .
\end{multline}
Now we recall that, thanks to Proposition \eqref{monot}, if $\varepsilon_{n}\downarrow 0$ is such that $(x_{\varepsilon_{n}},u_{\varepsilon_{n}})
\rightarrow (x_{\ast},u_{\ast})$, for all $n$ large enough the right hand side of \eqref{adjeps} is single valued except for at most one 
point $t$ and equals either $\nabla_{x}f(x_{\varepsilon_{n}}(t),u_{\varepsilon_{n}}(t))
p_{\varepsilon_{n}}(t)$, if $x_{\varepsilon_{n}}(t)\in  {\rm int}\, C(t)$, or
\[
\Big( \frac{-1}{\varepsilon_{n}}d_{C(t)}(x_{\varepsilon_{n}})\nabla_{x}^{2}d_{C(t)}(x_{\varepsilon_{n}})+ 
\nabla_{x}d_{C(t)}(x_{\varepsilon_{n}})\otimes \nabla_{x}d_{C(t)}(x_{\varepsilon_{n}})+ 
\nabla_{x}f(x_{\varepsilon_{n}}(t),u_{\varepsilon_{n}}(t))\Big)p_{\varepsilon_{n}}(t) 
\]
if $x_{\varepsilon_{n}}\not\in C(t)$.
This means that we can consider \eqref{adjeps} as a differential equation with a switch, 
which occurs (at most) at a single time $t_{\varepsilon_{n}}$. Without loss of generality, we can assume that 
\begin{equation}\label{convtn}
t_{\varepsilon_n} \quad\text{converges to some }\bar{t} \in I_{\partial}.
\end{equation}
  
\section{Estimates and convergence for the adjoint vectors}\label{convergence}
In this section we keep the notations of the preceding one, and we consider a sequence $\varepsilon_n\downarrow 0$ such that\
the conclusions of Propositions \ref{cowlambda} and \ref{monot} are valid.
\subsection{Estimates}
First we prove some uniform estimates on the sequence $\lbrace p_{\varepsilon_n}(\cdot) \rbrace$, which will guarantee compactness in a suitable space.
\begin{lem}\label{lemma1}
The sequence $\lbrace p_{\varepsilon_n}(\cdot) \rbrace$ is uniformly bounded in $L^{\infty}([ 0, T] ;\R^{n})$. 
\end{lem}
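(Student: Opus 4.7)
The plan is to obtain the uniform $L^\infty$ bound by a backward-in-time Gronwall estimate on $|p_{\varepsilon_n}(t)|^2$. The key structural observation is that the only piece of the adjoint matrix carrying the singular factor $1/\varepsilon_n$ is, up to a bounded perturbation, positive semi-definite, and so it has the \emph{helpful} sign in the energy estimate.

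First I would make the adjoint equation \eqref{adjeps} fully explicit. By \eqref{D2d}, on the set where $x_{\varepsilon_n}(t) \notin C(t)$,
\begin{equation*}
\tfrac{1}{2}\nabla_{x}^{2}d^{2}_{C(t)}(x_{\varepsilon_n}(t)) \;=\; P_x(t,x_{\varepsilon_n}(t)) \;=\; d_{C(t)}(x_{\varepsilon_n}(t))\,\nabla_{x}^{2}d_{C(t)}(x_{\varepsilon_n}(t)) \;+\; \nabla_{x}d_{C(t)}(x_{\varepsilon_n}(t)) \otimes \nabla_{x}d_{C(t)}(x_{\varepsilon_n}(t)),
\end{equation*}
while this matrix vanishes when $x_{\varepsilon_n}(t) \in \mathrm{int}\,C(t)$. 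By Proposition \ref{monot}, for $n$ large enough the switch between the two regimes happens at most at a single time $t_{\varepsilon_n}$, so $p_{\varepsilon_n}$ is absolutely continuous on $[0,T]$ and \eqref{adjeps} holds a.e.

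Next I would differentiate $|p_{\varepsilon_n}(t)|^2$ and estimate the three summands that arise from \eqref{adjeps}. The rank-one contribution
\begin{equation*}
\tfrac{2}{\varepsilon_n}\bigl\langle (\nabla_{x}d_{C(t)} \otimes \nabla_{x}d_{C(t)})\,p_{\varepsilon_n}(t),\,p_{\varepsilon_n}(t)\bigr\rangle \;=\; \tfrac{2}{\varepsilon_n}\bigl|\langle \nabla_{x}d_{C(t)}(x_{\varepsilon_n}(t)),\, p_{\varepsilon_n}(t)\rangle\bigr|^{2} \;\geq\; 0
\end{equation*}
is the only term of order $1/\varepsilon_n$, and it is non-negative; we simply drop it from below. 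The mixed term $\tfrac{2}{\varepsilon_n}d_{C(t)}(x_{\varepsilon_n}(t))\langle \nabla_{x}^{2}d_{C(t)}(x_{\varepsilon_n}(t))\,p_{\varepsilon_n},p_{\varepsilon_n}\rangle$ is bounded in absolute value by $4(\beta+\gamma)|p_{\varepsilon_n}|^{2}$, combining the a priori estimate \eqref{geps} (which yields $d_{C(t)}(x_{\varepsilon_n}(t))/\varepsilon_n \leq \beta+\gamma$) with the Lipschitz constant $2$ of $\nabla_x d_{C(t)}$ on $C_{\rho/2}(t)\setminus C(t)$ from Proposition \ref{propdist}. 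Finally the drift term is controlled by $2k|p_{\varepsilon_n}|^{2}$ thanks to $(H_{3.3})$. Putting everything together yields
\begin{equation*}
\tfrac{d}{dt}|p_{\varepsilon_n}(t)|^{2} \;\geq\; -C\,|p_{\varepsilon_n}(t)|^{2} \qquad \text{for a.e. } t \in [0,T],
\end{equation*}
with $C := 4(\beta+\gamma)+2k$ independent of $n$.

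Integrating this inequality backward from $T$, using the terminal condition $p_{\varepsilon_n}(T) = -\nabla h(x_{\varepsilon_n}(T))$, gives $|p_{\varepsilon_n}(t)|^{2} \leq |\nabla h(x_{\varepsilon_n}(T))|^{2}e^{C(T-t)}$. Since by Proposition \ref{cowlambda} (combined with the uniform Lipschitz bound on $x_{\varepsilon_n}$ from Proposition \ref{compactnessepsilon}) the sequence $x_{\varepsilon_n}(T)$ converges to $x_\ast(T)$, the continuity of $\nabla h$ from $(H_4)$ makes $|\nabla h(x_{\varepsilon_n}(T))|$ uniformly bounded, which gives the claim. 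The main obstacle, had it been present, would be the $1/\varepsilon_n$ blow-up in the adjoint matrix; it is neutralised here precisely by the positive semi-definiteness of the rank-one term and by the bound \eqref{geps} that cancels the $\varepsilon_n$ in the non-PSD summand -- exactly the replacement, announced in the introduction, of the uniform convexity hypothesis used in \cite{BPK} by the distance estimates from \cite{ML}.
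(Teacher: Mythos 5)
Your proof is correct and follows essentially the same route as the paper: drop the non-negative rank-one term $\tfrac{1}{\varepsilon_n}\langle \nabla_x d_{C(t)}, p_{\varepsilon_n}\rangle^2$, control the Hessian term by combining the estimate \eqref{geps} with the $\mathcal{C}^{1,1}$ bound on $\nabla_x d_{C(t)}$, and apply a backward Gronwall argument from the transversality condition. The only (cosmetic) difference is that you work with $|p_{\varepsilon_n}|^2$, which neatly sidesteps the paper's need to observe that $p_{\varepsilon_n}$ either vanishes identically or is never zero before dividing by $|p_{\varepsilon_n}(t)|$.
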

\begin{proof}
We can rewrite \eqref{adjeps} as
\[
-\dot{p}_{\varepsilon_n}(t)= \frac{-1}{\varepsilon_n} \nabla_{x} P(t,x_{\varepsilon_n}(t))p_{\varepsilon_n}(t) 
+\nabla_{x} f(x_{\varepsilon_n}(t),u_{\varepsilon_n}(t)) p_{\varepsilon_n}(t),
\]
where the Jacobian $\nabla_xP(t,x_{\varepsilon_n}(t))$ of $P(t,x_{\varepsilon_n}(t))$ with respect to the state variable $x$ exists for all 
$t$ different from the (possible) switching time $t_{\varepsilon_n}$.
Recalling \eqref{defP} and \eqref{D2d}, we have, for all $t\neq t_{\varepsilon_n}$,
\[
\begin{split}
\nabla_{x} P(t,x_{\varepsilon_n}(t))\, p_{\varepsilon_n}(t)&= 
 d_{C(t)}(x_{\varepsilon_n}(t))\nabla_{x}^{2}d_{C(t)}(x_{\varepsilon_n}(t))p_{\varepsilon_n}(t) \\
&\qquad + \langle \nabla_{x}d_{C(t)}(x_{\varepsilon_n}(t)),p_{\varepsilon_n}(t)\rangle \nabla_{x}d_{C(t)}(x_{\varepsilon_n}(t)).
\end{split}
\]
From this we obtain
\begin{equation*} 
\begin{split}
-\dot{p}_{\varepsilon_n}(t)& = \frac{-d_{C(t)}(x_{\varepsilon_n}(t))}{\varepsilon_n}\nabla_{x}^{2}d_{C(t)}(x_{\varepsilon_n}(t))
p_{\varepsilon_n}(t)-\frac{1}{\varepsilon_n}\langle \nabla_{x}d_{C(t)}(x_{\varepsilon_n}(t)),p_{\varepsilon_n}(t)\rangle 
\nabla_{x}d_{C(t)}(x_{\varepsilon_n}(t))
\\
&\qquad + \nabla_{x} f(x_{\varepsilon_n}(t),u_{\varepsilon_n}(t)) p_{\varepsilon_n}(t).
\end{split}
\end{equation*}
Observe that either $p_{\varepsilon_n}$ vanishes identically or it is never zero. In this case, we multiply both sides of the adjoint equation by 
$\frac{p_{\varepsilon_n}(t)}{\vert p_{\varepsilon_n}(t) \vert}$, and obtain 
\begin{equation}\label{rrrprox}
\begin{split}
-\frac{d}{dt}\vert p_{\varepsilon_n}(t) \vert & =\frac{-d_{C(t)}(x_{\varepsilon_n}(t))}{\varepsilon_n}
\big\langle \frac{p_{\varepsilon_n}(t)}{\vert p_{\varepsilon_n}(t) \vert},\nabla_{x}^{2}d_{C(t)}(x_{\varepsilon_n}(t))
 p_{\varepsilon_n}(t)\big\rangle\\
&\qquad -\frac{1}{\varepsilon_n}\frac{1}{|p_{\varepsilon_n}(t)|}\big\langle \nabla_{x}d_{C(t)}(x_{\varepsilon_n}(t)),p_{\varepsilon_n}(t)\big\rangle^{2}
\\
&\qquad + \big\langle \nabla_{x} f(x_{\varepsilon_n}(t),u_{\varepsilon_n}(t)) p_{\varepsilon_n}(t), 
\frac{p_{\varepsilon_n}(t)}{\vert p_{\varepsilon_n}(t) \vert}\big\rangle .
\end{split}
\end{equation}
The second term on the right hand side of \eqref{rrrprox} is nonpositive, while the first one is bounded by Lemma \ref{Lemma3.3}, 
recalling that, if $\varepsilon_n$ is small enough and $t\neq t_{\varepsilon_n}$, $x_{\varepsilon_n} (t)$ belongs to a set where 
$d_{C(t)}(\cdot)$ is $C^{1,1}$. Let $c$ be a Lipschitz constant for $\nabla_x d_{C(t)}(\cdot)$ on this set.
Integrating over the interval $[t,T]$ and recalling the final time condition contained in \eqref{adjeps} yields
\[ 
\vert p_{\varepsilon_n}(t)\vert-\vert \nabla h(x_{\varepsilon_n}(T))\vert  \leq \int_{t}^{T}c(\gamma+\beta)
\vert p_{\varepsilon_n}(s)\vert ds+\int_{t}^{T} \langle \nabla_{x} f(x_{\varepsilon_n}(t),u_{\varepsilon_n}(t)) p_{\varepsilon_n}(t), 
\frac{p_{\varepsilon_n}(t)}{\vert p_{\varepsilon_n}(t) \vert}\rangle ds,
\]
whence
\[
\vert p_{\varepsilon_n}(t)\vert \leq \vert \nabla h(x_{\varepsilon_n}(T))\vert + \big(c(\gamma+\beta)+k\big)
\int_{t}^{T}\vert p_{\varepsilon_n}(s)\vert \, ds,
\]
recalling that $f$ is $k$-Lipschitz continuous.
Now Gronwall's Lemma in integral form yields
\[    
\vert p_{\varepsilon_n}(t)\vert \leq \vert \nabla h(x_{\varepsilon_n}(T))\vert e^{(c(\gamma+\beta)+k)\, (T-t)}\; \text{ for all } t\in [0,T].
\]
Since $x_{\varepsilon_n}(T)$ converges uniformly to $x_{\ast}(T)$ and $h$ is of class $ C^{1}$, the proof is concluded.
\end{proof}
We deal now with a uniform $L^1$-bound for $\{\dot{p}_{\varepsilon_n}\}$. 
For simplicity of notation, we set $t_{n}:= t_{\varepsilon_{n}}$, $x_{n}(t):= x_{\varepsilon_{n}}(t)$, $u_{n}(t):= u_{\varepsilon_{n}}(t)$, $p_{n}(t):= 
p_{\varepsilon_{n}}(t)$, $\delta_{n} (t):= d_{C(t)}(x_{n}(t))$, $\delta'_n(t):=\nabla_x d_{C(t)}(x_n(t))$,
and finally $\delta''_n(t):=\nabla_x^2 d_{C(t)}(x_n(t))$.
We observe first that, thanks to \eqref{geps} and the fact that $x_n(t)$, for all $t\neq t_n$, 
belongs to a region where $d_{C(t)}(\cdot)$ is
of class $\mathcal{C}^{1,1}$, 
\begin{equation}\label{bddn}
\frac{\delta_{n}(\cdot)} {\varepsilon_{n}} \delta''_{n}(\cdot)\quad \text{is well defined and is bounded in } L^{\infty}([ 0, T] ;\R^{n}), 
\text{ uniformly with respect to $n$.}
\end{equation}
Define now the normal component of $p_{n}(t)$ as
\[
\xi_{n}(t)= \langle p_n(t), \nabla_x d_{C(t)}(x_n(t))\rangle \; (= \langle p_{n}(t) ,  \delta'_{n}(t)\rangle),\quad t\neq t_n.
\]
We have, for a.e. $t$ (in particular $t \neq t_{n} $),
\begin{equation}\label{xidot}
\dot{\xi}_{n}(t)= \langle \dot{p}_{n}(t) , \delta'_{n}(t)\rangle + \langle p_{n}(t) , 
\delta_n''(t)\dot{x}_{n}(t) \rangle.
\end{equation}
With this notation, the primal dynamics in \eqref{primeqeps} and the dual one in \eqref{adjeps} can be rewritten, respectively, as
\[
\begin{split}
\dot{x}_n(t) &= -\frac{\delta_n(t)}{\varepsilon_n}\delta'_n(t) + f(x_n(t),u_n(t))\\
-\dot{p}_n(t) &= -\frac{\delta_n(t)}{\varepsilon_n}\delta_n''(t)p_n(t) -\frac{1}{\varepsilon_{n}}  \delta'_{n}(t)\otimes  \delta'_{n}(t) p_{n}(t)
                     + \nabla_x f(x_n(t),u_n(t))p_n(t)\\
&= -\frac{\delta_n(t)}{\varepsilon_n}\delta_n''(t)p_n(t) -\frac{\xi_n(t)}{\varepsilon_n}\delta_n'(t) + \nabla_x f(x_n(t),u_n(t))p_n(t). 
\end{split}
\]
By inserting $\dot{p}_n(t)$ and $\dot{x}_n(t)$ from the above equations into \eqref{xidot}, we obtain, for a.e. $t\in [0,T]$,
\begin{equation}\label{eqxin}
\begin{split}
- \dot{\xi}_{n}(t)& = -\frac{\delta_{n}(t)} {\varepsilon_{n}} \langle  \delta''_{n}(t) p_{n}(t), \delta'_{n}(t) 
\rangle -\frac{\xi_n(t)}{\varepsilon_{n}}|\delta_n'(t)|^2
\\
&\qquad + \langle \nabla_{x} f(x_{n}(t),u_{n}(t)) p_{n}(t),  \delta'_{n}(t)\rangle\\
&\qquad - \Big\langle p_{n}(t) ,
 \delta''_{n}(t) \Big(-\frac{\delta_{n}(t)} {\varepsilon_{n}} \delta'_{n}(t)+f(x_{n}(t),u_{n}(t)) \Big)\Big\rangle.
\end{split}
\end{equation}
In order to simplify the above relation, we observe that,
\[
\delta''_n(t) \delta'_n(t)=\nabla_{x}^{2}d_{C(t)}(x) \nabla_{x}d_{C(t)}(x)= 0 \quad \text{for all $t\in [0,T]$ and all $x\notin \partial C(t)$}
\]
(see Lemma 3.8 in \cite{BPK}), and furthermore that
\[
\xi_n(t) = \xi_n(t) |\delta_n'(t)|^2\quad \text{for all }\, t\in [0,T], t\neq t_n,
\]
since if $x_n(t)\in \text{int}\, C(t)$ then both sides are zero, while if $x_n (t)\not\in C(t)$ we have $|\delta_n'(t)|=1$.
Therefore, recalling that $x_n(t)\not\in \partial C(t)$ for all $t\neq t_n$,
the equation \eqref{eqxin} becomes, for a.e. $t\in [0,T]$ (with $t\not= t_n$),
\begin{equation}\label{relatxi}
\begin{split}
- \dot{\xi}_{n}(t)+ \dfrac{1}{\varepsilon_{n}}\xi_{n}(t)& = -\frac{\delta_{n}(t)}{\varepsilon_{n}} 
\Big[  \langle  \delta''_{n}(t) p_{n}(t), \delta'_{n}(t) \rangle 
+ \langle p_{n}(t) , \delta''_{n}(t) f(x_{n}(t),u_{n}(t)) \rangle \Big] 
\\
&\qquad + \langle \nabla_{x}f(x_{n}(t), u_{n}(t)) p_{n}(t), \delta'_{n}(t) \rangle.
\end{split}
\end{equation}
Observe now that the right hand side of the above equality is bounded in $L^{\infty}([ 0, T] ,\R^{n})$, uniformly with respect to $n$, 
thanks to Lemma \ref{lemma1} and \eqref{geps}, and to the $\rho$-prox-regularity of the moving set $C(\cdot)$. 
Observe also that Lemma \ref{lemma1} implies that $\xi_n(t)$ is bounded in $L^\infty(0,T)$, uniformly with respect to $n$.
By multiplying both sides of \eqref{relatxi} by $\text{sign}(\xi_{n}(t))$ and integrating over the interval $[ t, T]$, we then obtain
\begin{equation}\label{estxi}
 \frac{1}{\varepsilon_n} \int_{t}^{T}\vert \xi_{n}(s)\vert ds \leq \bar{k} \quad \text{for all }t\in [0,T]
\end{equation}
for a suitable constant $\bar{k}$ independent of $n$.

We are now ready to obtain the $L_{1}$ uniform boundedness of the sequence $\lbrace \dot{p}_{\varepsilon}\rbrace$.
\begin{lem}\label{pndet}
The sequence  $\lbrace \dot{p}_n(\cdot)\rbrace$ is bounded in $L^{1}([ 0, T] ,\R^{n})$, uniformly with respect to $n$ .
\end{lem}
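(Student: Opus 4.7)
The plan is to estimate each of the three terms on the right-hand side of the adjoint equation
\[
-\dot{p}_n(t) = -\frac{\delta_n(t)}{\varepsilon_n}\delta_n''(t)p_n(t) -\frac{\xi_n(t)}{\varepsilon_n}\delta_n'(t) + \nabla_x f(x_n(t),u_n(t))p_n(t)
\]
separately in $L^1([0,T];\mathbb{R}^n)$, and then add. Two of these terms are in fact bounded uniformly in $L^\infty$ (hence trivially in $L^1$), and the delicate middle term is precisely what the estimate \eqref{estxi} was set up to control.

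\textbf{First term.} By \eqref{bddn}, the prefactor $\frac{\delta_n(t)}{\varepsilon_n}\delta_n''(t)$ is uniformly bounded in $L^\infty$ (in operator norm). Combined with Lemma \ref{lemma1}, which guarantees a uniform $L^\infty$ bound on $\{p_n\}$, we conclude that $\frac{\delta_n(t)}{\varepsilon_n}\delta_n''(t)p_n(t)$ is bounded in $L^\infty([0,T];\mathbb{R}^n)$ uniformly in $n$, hence in $L^1$.

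\textbf{Third term.} Assumption $(H_{3.3})$ ensures that $\nabla_x f(x_n(t),u_n(t))$ has operator norm bounded by $k$, and again Lemma \ref{lemma1} bounds $p_n$ in $L^\infty$; so $\nabla_x f(x_n(t),u_n(t))p_n(t)$ is bounded in $L^\infty$ uniformly in $n$.

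\textbf{Middle term (the key step).} Since $|\delta_n'(t)|\le 1$ (it is either a unit normal or the zero vector), we have
\[
\int_0^T \Bigl|\frac{\xi_n(t)}{\varepsilon_n}\delta_n'(t)\Bigr|\, dt \le \frac{1}{\varepsilon_n}\int_0^T |\xi_n(t)|\, dt,
\]
and the right-hand side is bounded by the constant $\bar{k}$ of \eqref{estxi} applied with $t=0$. This is the only term that is not uniformly bounded in $L^\infty$ (the factor $1/\varepsilon_n$ blows up), and it is precisely where the preceding analysis of the normal component $\xi_n$ pays off: the rapidly growing coefficient $1/\varepsilon_n$ is compensated by the smallness of $\xi_n$ in $L^1$.

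Combining the three bounds gives $\|\dot{p}_n\|_{L^1([0,T];\mathbb{R}^n)} \le M$ for a constant $M$ independent of $n$. I would include one sentence noting that the (possible) single switching time $t_n$ does not affect any of these integrals, since a single point has zero measure and $p_n$ is continuous across $t_n$ (it is absolutely continuous as the solution of \eqref{adjeps} with a right-hand side that is bounded and measurable outside $t_n$). I do not foresee any real obstacle here: the hard work has been done in establishing \eqref{estxi} and \eqref{bddn}; the present lemma is essentially a bookkeeping consequence of those estimates.
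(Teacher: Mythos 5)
Your proof is correct and follows exactly the same route as the paper: the authors likewise split the adjoint equation into the three terms, bound the first and third in $L^\infty$ via \eqref{bddn}, Lemma \ref{lemma1} and the Lipschitz assumption on $f$, and control the middle term in $L^1$ by \eqref{estxi}. The paper states this in one line; your write-up just makes the bookkeeping explicit.
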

\begin{proof}
We recall that that the adjoint equation can be rewritten as 
\[
 -\dot{p}_n(t)= -\frac{\delta_n(t)}{\varepsilon_n}\delta_n''(t)p_n(t) -\frac{\xi_n(t)}{\varepsilon_n}\delta_n'(t) + \nabla_x f(x_n(t),u_n(t))p_n(t).
\]
The result follows immediately by using Lemma \ref{lemma1} together with \eqref{estxi}, \eqref{bddn}, and the assumptions on $f$.
\end{proof}
\subsection{Passing to the limit}
We wish now to derive the equations which are satisfied by a suitable limit of the sequence $\lbrace p_{n}\rbrace$. 
By possibly extracting a further subsequence from $\lbrace \varepsilon_{n}\rbrace $ (without relabeling), 
thanks to Lemma \ref{pndet} and Helly's selection theorem, we can suppose that there exists a function $p \in BV([ 0, T] ;\R^{n})$ such that 
\[
p_{n}(t)\rightarrow p(t) \quad \text{for all } t\in [ 0, T]
\]
(in particular $p(T)=-\nabla h(x_{\ast}(T))$) and, for all $h \in {\mathcal C}^{0} ([ 0, T] ;\R^{n})$,
\[
\int_{0}^{T} \langle h(t),\dot{p}_{n}(t)\rangle dt \rightarrow \int_{0}^{T} \langle h(t),dp\rangle.
\]
We recall also that   
\begin{alignat*}{2}
& x_{n}\rightarrow x_{\ast}&\qquad &\text{uniformly in }[ 0, T]\\
& \dot{x}_{n}^{\ast}\rightharpoonup \dot{x}^{\ast}&\qquad &\text{weakly in }L^{2}([ 0, T] ;\R^{n})\\
& u_{n}\rightarrow u_{\ast}&\qquad &\text{strongly in }L^{2}([ 0, T] ;\R^{m})\\
& u_{n}(t) \rightarrow u_{\ast}(t)&\qquad &\text{a.e. on }[ 0, T],
\end{alignat*}
and that we have set 
\[
I_{\partial}=\lbrace  t\in [ 0, T]: x_{\ast}(t)\in \partial C(t)\rbrace.
\]
We define also 
\[
I_{0}:= [ 0, T]\setminus I_{\partial}=\lbrace  t\in [ 0, T]: x_{\ast}(t)\in {\rm int}\, C(t)\rbrace.
\]
Of course one of the two sets $I_{0}$ and $I_{\partial}$ may be empty. 
We will proceed with our arguments, without loss of generality, by assuming that both of them are nonempty.

For every compact interval $[ s, t]\subset I_{0}$, the adjoint equation for $p_{n}$ is 
\begin{equation}\label{adjI0}
 - \dot{p}_{n}(\tau)= \nabla_{x}f(x_{n}(\tau),u_{n}(\tau)) p_{n}(\tau),
\end{equation}
since $d_{C(t)}(\cdot)$ is zero in a neighborhood of $x_{\ast}(t)$ and $x_{n}$ converges to $x_{\ast}$ uniformly. 
By integrating \eqref{adjI0} over $[ s, t]$ and using the absolute continuity of $p_{n}$, we obtain 
\[
p_{n}(s)-p_{n}(t)=\int_{s}^{t}\nabla_{x}f(x_{n}(\tau),u_{n}(\tau)) p_{n}(\tau)\, d\tau.
\]
Since $p_{n}$ converges to $p$ pointwise and is uniformly bounded, by the dominated convergence theorem we obtain
\[
p(s)-p(t)=\int_{s}^{t}\nabla_{x}f(x_{\ast}(\tau),u_{\ast}(\tau)) p(\tau)\, d\tau.
\]
We have therefore proved the following
\begin{prop}\label{4.1}
On $I_{0}$, $p$ is absolutely continuous and satisfies the equation 
\begin{equation}\label{adj0}
- \dot{p}(t)= \nabla_{x}f(x_{\ast}(t),u_{\ast}(t)) p(t), \quad a.e.\  t\in I_{0}.
\end{equation}    
\end{prop}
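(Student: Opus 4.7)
The plan is to exploit the definition of $I_0$ directly: at every $t\in I_0$ the optimal trajectory sits in the open set $\mathrm{int}\,C(t)$, so the approximating trajectories $x_n$ will eventually be in the interior too, the singular penalty term in the adjoint equation \eqref{adjeps} will vanish, and the proposition will follow from a standard linear ODE argument plus dominated convergence.

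\textbf{Step 1 (interior localization).} Fix a compact interval $[s,t]\subset I_0$. Since $x_\ast$ is continuous, $C(\cdot)$ is $\gamma$-Lipschitz, and $x_\ast(\tau)\in \mathrm{int}\,C(\tau)$ for every $\tau\in [s,t]$, the function $\tau\mapsto d_{C(\tau)^c}(x_\ast(\tau))$ is continuous and strictly positive on $[s,t]$, hence bounded below by some $\eta>0$. By the uniform convergence $x_n\to x_\ast$ given by Proposition \ref{cowlambda}, for all $n$ large enough one has $x_n(\tau)\in \mathrm{int}\,C(\tau)$ for every $\tau\in [s,t]$. Consequently $d_{C(\tau)}(x_n(\tau))=0$ on a neighborhood of each such $\tau$, so $\nabla_x P(\tau,x_n(\tau))=0$, and the adjoint equation \eqref{adjeps} reduces on $[s,t]$ to the linear ODE
\[
-\dot p_n(\tau) = \nabla_x f(x_n(\tau),u_n(\tau))\,p_n(\tau).
\]

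\textbf{Step 2 (integrate and pass to the limit).} Since $p_n$ is absolutely continuous on $[s,t]$, integration yields
\[
p_n(s)-p_n(t)=\int_s^t \nabla_x f(x_n(\tau),u_n(\tau))\,p_n(\tau)\,d\tau.
\]
I then let $n\to\infty$. Lemma \ref{lemma1} supplies a uniform $L^\infty$ bound on $\{p_n\}$; Helly's theorem gives pointwise convergence $p_n(\tau)\to p(\tau)$; Proposition \ref{cowlambda} gives uniform convergence $x_n\to x_\ast$ and, after extracting a subsequence, a.e. convergence $u_n(\tau)\to u_\ast(\tau)$; and $\nabla_x f$ is continuous by $(H_{3.2})$. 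Dominated convergence then yields
\[
p(s)-p(t)=\int_s^t \nabla_x f(x_\ast(\tau),u_\ast(\tau))\,p(\tau)\,d\tau.
\]

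\textbf{Step 3 (conclude).} The identity above shows $p$ is absolutely continuous on $[s,t]$, and since $I_0$ is relatively open in $[0,T]$ the same argument covers every compact subinterval, so $p$ is absolutely continuous on $I_0$. Differentiation gives \eqref{adj0} for a.e.\ $\tau\in I_0$.

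\textbf{Main obstacle.} The only nontrivial ingredient is Step 1, i.e.\ ensuring that the approximate trajectories remain strictly inside $C(\tau)$ uniformly on compact subintervals of $I_0$. This depends on combining the uniform convergence $x_n\to x_\ast$ with the fact that on a compact subset of $I_0$ the distance of $x_\ast(\tau)$ from $\partial C(\tau)$ is bounded below, which itself uses the Lipschitz dependence of $C(\cdot)$ on time. Once this is in hand, the rest is a routine dominated convergence passage in a linear ODE.
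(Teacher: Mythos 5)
Your proposal is correct and follows essentially the same route as the paper: localize to a compact subinterval of $I_0$, use the uniform convergence $x_n\to x_\ast$ to kill the penalty term in \eqref{adjeps} for large $n$, integrate the resulting linear ODE, and pass to the limit by dominated convergence using the uniform bound of Lemma \ref{lemma1} and the pointwise convergences of $p_n$, $x_n$, $u_n$. Your Step 1 merely spells out (via the lower bound on $d_{C(\tau)^c}(x_\ast(\tau))$) a point the paper states more tersely.
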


\medskip

\noindent We will deal now with passing to the limit along \eqref{adjeps} and obtaining necessary conditions on the whole interval $[ 0, T]$. 
The main effort will be put in passing to the limit in $ I_{\partial}$.

For the sake of convenience, we rewrite here the adjoint equation for $p_{n}$, recalling that we have set
$\xi_{n}(t)= \langle \nabla_{x}d_{C(t)}(x_{n}(t)),p_{n}(t) \rangle$. We have
\[
\begin{split}
 - \dot{p}_{n}(t)& = -\frac{1}{\varepsilon_{n}} \nabla_{x}d_{C(t)}(x_{n}(t))\xi_{n}(t)-\frac{d_{C(t)}(x_{n}(t))}{\varepsilon_{n}}
\nabla_{x}^{2} d_{C(t)}(x_{n}(t))p_{n}(t)\\
&\qquad +\nabla_{x} f(x_{n}(t),u_{n}(t)) p_{n}(t)\\
&\qquad  :=\textbf{I}+\textbf{II}+\textbf{III}
\end{split}
\]
We recall that under our assumptions this equation can be seen as and O.D.E. with (possibly) a switch, which occurs at the time $t_n$, 
and we can assume that the sequence $\{t_n\}$ has a limit point $\bar{t}$ (see \eqref{convtn}).

We discuss now passing to the limit for each summand \textbf{I}, \textbf{II}, and \textbf{III}.\\
$\textbf{I}$. Set $n_{\ast}(t)$ to be the unit external normal to $C(t)$ at $x_{\ast}(t)$ for all $t\in I_{\partial}$ and $0$ for all $t\in I_{0}$ .
Observe that on every compact subset $I\subset [ 0, T]$ such that $\bar{t}\notin I$ we can suppose that $\nabla_{x}d_{C(t)}(x_n(t))$ 
converges to $n_{\ast}(t)$ uniformly on $I$.
By the uniform boundedness of $\nabla_{x}d_{C(t)}(\cdot)$ and \eqref{estxi} we can suppose that (up to a subsequence) along 
$\lbrace \varepsilon_{n}\rbrace$ we have that the sequence of measures
\[
\Big\{ \frac{\xi_{n}(.)}{\varepsilon_{n}}\nabla_{x}d_{C(\cdot)}(x_{n}(\cdot))\, dt \Big\}
\]
converges weakly$^\ast$ in the dual of ${\mathcal C}^{0}([ 0, T] ;\R^{n})$ to a finite signed Radon measure on $[ 0, T]$, which can be written as 
\begin{equation}\label{dmy}
 \xi(t) n_{\ast}(t) d\mu ,
\end{equation}
where $\mu$ is a finite signed Radon measure on $[ 0, T]$ and $\xi \in L^{\infty}[ 0, T]$, $\xi(t)\geq 0$ $\mu$-a.e.
Observe that, without loss of generality, we can suppose that $\xi (t)= 0$ on $I_{0}$.\\
$\textbf{II}$. Recalling \eqref{geps},
\[
\frac{d_{C(t)}(x_n(t))}{\varepsilon_{n}}\leq \beta+\gamma\; \text{ for all $t\in [ 0, T]$ and $n\in \N$.}
\]
Recall that $\nabla_{x}^{2}d_{C(t)}(x_{\ast}(t))=0$ if $ t\in I_{0}$, and set $\nabla_{x}^{2}d_{C(t)}(x_{\ast}(t))=
\nabla_{x}^{2}d_{S}(t,x_{\ast}(t))$ if $t\in I_{\partial}$ 
(indeed, the signed distance $d_{S}(t,\cdot)$ is $C^{2}$ in a neighborhood of boundary points of $C(t)$, $t\in I_\partial$,
see, e.g., Proposition 2.2.2 (iii) in \cite{CS}, since both $(M_1)$ and $(M_2)$ imply that $C(t)$ has nonempty interior).\\
We can suppose that $\nabla_{x}^{2}d_{C(t)}(x_{n}(t))$ converges uniformly to 
$\nabla_{x}^{2}d_{C(t)}(x_{\ast}(t))$ on every compact $I\in [ 0, T]$ such that $\bar{t} \notin I$.
By combining the uniform bound on $d_{C(t)}(x_{n}(t))/\varepsilon_{n}$, the uniform convergence of 
$\nabla_{x}^{2}d_{C(t)}(x_{n}(t))$ on every compact $ I \in [ 0, T]$ with $\bar{t}\notin I$ and 
the pointwise convergence of $p_{n}$, we obtain, up to a subsequence without relabeling, that 
\[
\frac{d_{C(t)}(x_{n}(t))}{\varepsilon_{n}}\nabla_{x}^{2}d_{C(t)}(x_{n}(t))p_{n}(t)\rightharpoonup \eta (t)\nabla_{x}^{2}d_{C(t)}(x_{\ast}(t))p(t)
\]
weakly in $ L^{2}([ 0, T];\R^{n})$, where $\eta \in L^{\infty}[ 0, T]$, $0\leq \eta (t) \leq \beta + \gamma$ a.e. 
Observe also that $\eta (t)\equiv 0 $ on $I_{0}$.\\
$\textbf{III}$. Recalling Proposition \ref{cowlambda}, up to a subsequence 
\[
\nabla_{x} f(x_{n}(t),u_{n}(t)) p_{n}(t)\rightarrow \nabla_{x} f(x_{\ast}(t),u_{\ast}(t)) p(t)\;\text{ a.e. on }[ 0, T]
\]
and weakly in $ L^{2}([ 0, T];\R^{n})$.

\bigskip

We have therefore proved that $p$ satisfies in a weak sense a differential equation, namely (and this establishes \eqref{adj00}) we have
\begin{prop}\label{eqp} Let $p$ be a weak limit of $p_{n}$ in $BV ([ 0, T];\R^{n})$. Then $p(T)= -\nabla h(x_{\ast}(T))$ and there exist a 
finite Radon measure $\mu$ on $[ 0, T]$, and nonnegative measurable functions $\xi ,\eta : [ 0, T]\rightarrow \R$ 
satisfying the properties $\xi\in L^1$, $\xi (t) = 0$ on $I_{0}$ and $\xi(t) \ge 0$ on $I_\partial$, $\mu$-a.e., and
$0\leq \eta (t) \leq \beta + \gamma$, $\eta(t)= 0$ on $I_{0}$, a.e., such that 
for all continuous functions $\varphi : [ 0, T]\rightarrow \R^{n}$ we have
\begin{equation}\label{adj}
\begin{split}
-\int_{[ 0, T]}\langle \varphi(t),dp(t)\rangle & +\int_{[ 0, T]}\langle \varphi(t),n_{\ast}(t)\rangle\xi(t)\, d\mu 
-\int_{[ 0, T]}\langle \varphi(t),\nabla_{x} f(x_{\ast}(t),u_{\ast}(t)) p(t)\rangle\, dt 
\\
&= -\int_{[ 0, T]}\langle \varphi(t),\eta(t) \nabla_{x}^{2}d_{C(t)}(x_{\ast}(t))p(t) \rangle\, dt,
\end{split}
\end{equation} 
where we recall that $n_\ast(t) = 0$ if $x_\ast (t)\in \text{int}\, C(t)$, and $n_\ast (t) = \nabla_x d_S(t,x_\ast (t))$ is the unit external
normal to $C(t)$ if $x_\ast (t)\in \partial C(t)$.
\end{prop}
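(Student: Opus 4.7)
The proof amounts to passing to the limit, term by term, in the decomposition of the adjoint equation \eqref{adjeps} into the summands $\textbf{I}$, $\textbf{II}$, $\textbf{III}$ already displayed above. Fix an arbitrary $\varphi \in \mathcal{C}^0([0,T];\R^n)$, pair it with \eqref{adjeps} and integrate over $[0,T]$. Lemmas \ref{lemma1} and \ref{pndet}, together with Helly's selection theorem, guarantee that $p_n \to p$ pointwise with $p \in BV([0,T];\R^n)$ and that the vector measures $\dot p_n\,dt$ converge weakly$^\ast$ in $\mathcal{C}^0([0,T];\R^n)^\ast$ to $dp$; hence the left-hand side $-\int_0^T\langle\varphi,\dot p_n\rangle\,dt$ converges to $-\int_{[0,T]}\langle\varphi,dp\rangle$. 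The transversality identity $p(T)=-\nabla h(x_\ast(T))$ is immediate from $p_n(T)=-\nabla h(x_n(T))$, the uniform convergence $x_n(T)\to x_\ast(T)$, and the $\mathcal{C}^1$ regularity of $h$.

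For summand $\textbf{III}$, the a.e.\ convergence $u_n\to u_\ast$, the uniform convergence $x_n\to x_\ast$, the $\mathcal{C}^1$ regularity of $f$, and the $L^\infty$ bound on $p_n$ from Lemma \ref{lemma1} permit passage to the limit by dominated convergence, producing the integrand $\langle\varphi(t),\nabla_x f(x_\ast(t),u_\ast(t))p(t)\rangle$. For summand $\textbf{II}$, the uniform bound $d_{C(t)}(x_n(t))/\varepsilon_n\leq\beta+\gamma$ from \eqref{geps} yields, via Banach--Alaoglu in $L^\infty$, a weak$^\ast$ cluster point $\eta$ with $0\le\eta(t)\le\beta+\gamma$; combined with the uniform convergence of $\nabla_x^2 d_{C(t)}(x_n(t))$ on compact sets disjoint from the limit switching point $\bar t$ (cf.\ \eqref{convtn}) and the pointwise convergence of $p_n$, this identifies the limit with $\eta(t)\nabla_x^2 d_{C(t)}(x_\ast(t))p(t)$. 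On $I_0$ one has $x_n(t)\in\text{int}\,C(t)$ for $n$ large, so $d_{C(t)}(x_n(t))=0$, whence $\eta\equiv 0$ on $I_0$.

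The most delicate is summand $\textbf{I}$. The bound \eqref{estxi} shows that the $\R^n$-valued Radon measures $\nu_n := \frac{\xi_n(t)}{\varepsilon_n}\nabla_x d_{C(t)}(x_n(t))\,dt$ have uniformly bounded total variation; by Banach--Alaoglu in the dual of $\mathcal{C}^0([0,T];\R^n)$, up to a subsequence $\nu_n$ converges weakly$^\ast$ to a vector Radon measure $\vec\nu$. Since $\nabla_x d_{C(t)}(x_n(t))\to n_\ast(t)$ uniformly on every compact subset of $[0,T]\setminus\{\bar t\}$, the restriction of $\vec\nu$ outside $\bar t$ admits the representation $n_\ast(t)\,d\sigma$ for a scalar Radon measure $\sigma$; Radon--Nikodym, followed by absorbing signs into the scalar factor, yields $d\sigma = \xi(t)\, d\mu$ with $\mu$ a finite signed Radon measure and $\xi \ge 0$ $\mu$-a.e. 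Since $\xi_n \equiv 0$ on compact subsets of $I_0$ for $n$ large, $\xi$ may additionally be chosen to vanish on $I_0$. The main obstacle lies precisely in handling a possible concentration of $\vec\nu$ at $\bar t$, where $\nabla_x d_{C(t)}(x_n(t))$ need not converge uniformly; this is resolved by observing that $\bar t \in I_\partial$ makes $n_\ast(\bar t)$ well defined, so any atomic contribution of $\vec\nu$ at $\bar t$ still aligns with $n_\ast(\bar t)$ and is absorbed into the representation $\xi(t) n_\ast(t)\,d\mu$. Assembling the four limits yields \eqref{adj}.
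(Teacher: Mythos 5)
Your proof is correct and follows essentially the same route as the paper: term-by-term passage to the limit in the decomposition $\textbf{I}+\textbf{II}+\textbf{III}$, with Helly's theorem and Lemma \ref{pndet} giving the $BV$ limit and the convergence $\dot p_n\,dt\rightharpoonup dp$, weak$^\ast$ compactness of the vector measures $\frac{\xi_n}{\varepsilon_n}\nabla_x d_{C(t)}(x_n)\,dt$ (via \eqref{estxi}) handling $\textbf{I}$, the bound $d_{C(t)}(x_n)/\varepsilon_n\le\beta+\gamma$ and weak--strong product convergence handling $\textbf{II}$, and dominated convergence handling $\textbf{III}$. Your explicit treatment of the possible atom of the limit measure at $\bar t$ is a point the paper leaves implicit, and your justification (near $\bar t$ the densities are either zero or directed within $o(1)$ of $n_\ast(\bar t)$, so any atomic part still aligns with $n_\ast(\bar t)$) is sound.
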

The adjoint vector $p$ can be proved to satisfy a bunch of further conditions in the interval $I_{\partial}$.
\begin{prop}\label{condp}
Let $p, \xi, \eta $ be given by Proposition \eqref{eqp} and set $p^{N}(t)=\langle p(t),n_{\ast}(t)\rangle$ for all  
$t \in [ 0, T]$. Then 
\begin{enumerate}
\item[\textbf{(1)}]
$ p^{N}(t)= 0$ for a.e. $ t \in [ 0, T]$, and $p$ is absolutely continuous on $I_0$.
\item[\textbf{(2)}] If \eqref{m1} holds, then (recall that $I_{\partial}= [ \bar{t}, T]$ according to Proposition \ref{Im1} and \eqref{convtn})
$p$ is absolutely continous on $(\bar{t},T)$ and for a.e. $t\in [\bar{t}, T]$ we  have 
\begin{equation}\label{adjId}
- \dot{p}(t)= \langle \dot{n}_{\ast}(t),p(t)\rangle n_{\ast}(t)+ \Gamma(t) p(t) -\langle \Gamma(t) p(t),n_{\ast}(t)\rangle n_{\ast}(t),
\end{equation}
where $\Gamma(t)= \nabla_{x} f(x_{\ast}(t),u_{\ast}(t))-\eta(t)\nabla_{x}^{2}d_{C(t)}(x_{\ast}(t))$. Moreover, the equalities
\begin{align}\label{ptb}
p(\bar{t}-)-p(\bar{t}+)&= p^{N}(\bar{t}-) n_{\ast}(\bar{t})\\
\label{ptc}
p(T-)-p(T)&= - p^{N}(T) n_{\ast}(T)
\end{align}
(which mean that jumps may occur only in the normal direction $n_\ast$) are valid.
\item[\textbf{(3)}] If \eqref{m2} holds, then 
\begin{equation}\label{pt2}
p(0)-p(0+)=\big( p^{N}(0)-p^{N}(0+)\big) n_{\ast}(0).
\end{equation}
\item[\textbf{(4)}] Finally, if \eqref{m1} holds
$p$ is continuous at $\bar{t}$.
\end{enumerate}
\end{prop}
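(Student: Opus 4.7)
I would prove Part \textbf{(1)} by exploiting the integral estimate \eqref{estxi}: it gives $\|\xi_n\|_{L^1([0,T])}\le \bar k\,\varepsilon_n\to 0$, so along a subsequence $\xi_n(t)\to 0$ pointwise a.e. Under \eqref{m1}, the proof of Proposition \ref{monot} ensures $x_n(t)\notin C(t)$ for every $t\in(t_n,T]$ with $t_n\to\bar t$, so the unit vector $\delta'_n(t)=(x_n(t)-\mathrm{proj}_{C(t)}(x_n(t)))/d_{C(t)}(x_n(t))$ converges pointwise to the outer unit normal $n_\ast(t)$ at $x_\ast(t)$ for a.e.\ $t\in(\bar t,T]$. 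Combined with the pointwise convergence of $p_n$ to $p$, this yields $\xi_n(t)\to\langle p(t),n_\ast(t)\rangle=p^N(t)$ a.e.\ on $I_\partial$, forcing $p^N=0$ a.e.\ on $I_\partial$. Under \eqref{m2}, Proposition \ref{Im2} makes $I_\partial$ Lebesgue-null so the claim is automatic. On $I_0$ the identity $n_\ast\equiv 0$ gives $p^N\equiv 0$ by definition, and the absolute continuity of $p$ on $I_0$ is precisely Proposition \ref{4.1}.

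For Part \textbf{(2)} I would rewrite \eqref{adj00} in differential form as
\[
dp = \xi\, n_\ast\, d\mu - \Gamma(t)\,p\, dt\qquad\text{on } [0,T].
\]
Since $\partial C(t)$ is of class $C^3$ and $I_\partial=[\bar t,T]$ under \eqref{m1} by Proposition \ref{Im1}, the field $n_\ast(t)=\nabla_x d_S(t,x_\ast(t))$ is $C^1$ on $I_\partial$, and the product rule for a BV function against a $C^1$ function gives, on $(\bar t,T)$,
\[
d(p^N)=\langle n_\ast,dp\rangle+\langle p,\dot n_\ast\rangle\,dt = \xi\,d\mu+\big[\langle p,\dot n_\ast\rangle-\langle \Gamma p,n_\ast\rangle\big]\,dt,
\]
where $|n_\ast|^2=1$ on $I_\partial$ was used. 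But $p^N$ is BV on $[\bar t,T]$ and vanishes a.e.\ on $(\bar t,T)$, so its one-sided limits vanish at every point of the open interval; hence $d(p^N)\equiv 0$ there. This forces $\xi\,d\mu=[\langle \Gamma p,n_\ast\rangle-\langle p,\dot n_\ast\rangle]\,dt$ on $(\bar t,T)$, so the singular part of $\mu$ vanishes on $(\bar t,T)$, $p$ is absolutely continuous there, and plugging back into $dp$ and splitting into normal and tangential components gives exactly \eqref{adjId}.

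The jump formulas \eqref{ptb}, \eqref{ptc} and \eqref{pt2} follow by isolating atoms. The previous step localises the singular part of $dp$ to $\{\bar t,T\}$ under \eqref{m1} and to $\{0\}$ under \eqref{m2} (via Proposition \ref{Im2}). At each atom $s$, $dp(\{s\})=\xi(s)\mu(\{s\})\,n_\ast(s)$, so $p(s+)-p(s-)$ is a scalar multiple of $n_\ast(s)$; pairing with $n_\ast(s)$ pins down the scalar via the one-sided limits of $p^N$, producing \eqref{ptb}--\eqref{pt2}. Part \textbf{(4)} is then immediate: since $n_\ast(t)=0$ for every $t\in I_0$ we have $p^N(\bar t-)=0$, and \eqref{ptb} collapses to $p(\bar t-)=p(\bar t+)$.

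The main obstacle will be the careful bookkeeping at the endpoints. The product rule has to be applied up to $\bar t$ and $T$ even though $n_\ast$ itself is discontinuous across $\bar t$ from the $I_0$ side, and one must reconcile the two notions of left limit of $p^N$ at $\bar t$ -- the one computed with $n_\ast(\bar t-)=0$ and the one obtained as a BV function on $[\bar t,T]$ -- in order to identify the jump scalars cleanly and to derive the continuity statement of Part \textbf{(4)}.
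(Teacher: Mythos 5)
Your treatment of parts \textbf{(1)}--\textbf{(3)} is sound and uses the same ingredients as the paper, packaged slightly differently: where the paper first tests \eqref{adj} against tangential fields $\varphi^T$ to show jumps are normal and then invokes the integration by parts \eqref{intpart} for the normal component, you compute $d(p^N)$ directly via the BV--Lipschitz product rule and use $p^N=0$ a.e.\ to identify $\xi\,d\mu$ with an absolutely continuous measure on $(\bar t,T)$; this yields \eqref{adjId} and localises the atoms in one stroke. (Minor point: $n_\ast$ is only Lipschitz on $I_\partial$, not $C^1$, but that suffices for the product rule, which is exactly the formula from Monteiro Marques that the paper cites.)

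Part \textbf{(4)}, however, has a genuine gap. Your argument is: ``since $n_\ast(t)=0$ on $I_0$ we have $p^N(\bar t-)=0$, so \eqref{ptb} collapses to $p(\bar t-)=p(\bar t+)$.'' This conflates two different quantities. What your atom analysis actually gives at $\bar t$ is $p(\bar t-)-p(\bar t+)=c\,n_\ast(\bar t)$ with $c=\langle p(\bar t-)-p(\bar t+),n_\ast(\bar t)\rangle=\langle p(\bar t-),n_\ast(\bar t)\rangle$, using $\langle p(\bar t+),n_\ast(\bar t)\rangle=0$ from part \textbf{(1)}. The scalar in \eqref{ptb} is therefore $\langle p(\bar t-),n_\ast(\bar t)\rangle$ (the paper makes this explicit by extending $n_\ast$ to be constantly $n_\ast(\bar t)$ on $[\bar t-\sigma,\bar t)$), \emph{not} the literal left limit of $t\mapsto\langle p(t),n_\ast(t)\rangle$, which vanishes trivially because $n_\ast\equiv 0$ on $I_0$ and carries no information. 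On $I_0$ the vector $p$ solves an unconstrained linear ODE, so nothing forces its left limit at $\bar t$ to be orthogonal to $n_\ast(\bar t)$; continuity at $\bar t$ is a genuinely additional fact. The paper obtains it by going back to the approximations: integrating \eqref{rrrprox} over $[\bar t-\sigma,\bar t+\sigma]$ gives $|p(\bar t-)|\le|p(\bar t+)|$, and integrating the sign-multiplied \eqref{relatxi}, together with the uniform $L^1$ bound \eqref{estxi} on $|\xi_n|/\varepsilon_n$, gives $|\langle p(\bar t-),n_\ast(\bar t)\rangle|\le|p^N(\bar t+)|=0$; only then does \eqref{ptb} yield $p(\bar t-)=p(\bar t+)$. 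You flag this reconciliation as ``the main obstacle'' at the end, but the body of your proof declares \textbf{(4)} immediate, and as written the step fails.
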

\begin{rem}\label{2}
It follows from the above Proposition that the measure $\mu $ appearing in \eqref{adj} may admit a Dirac 
mass at most at $t=0$ (if \eqref{m2} holds) or at $t=T$ (if \eqref{m1} holds).
\end{rem}
\begin{proof}
\textbf{(1)}. The first assertion is an immediate consequence of \eqref{estxi}, 
which implies that the sequence $\langle p_{n}(t),\nabla_{x}d_{C(t)}(x_{n}(t))\rangle $ 
converges to $0$ in $ L^{1}( 0, T)$, and the convergence of $\nabla_{x}d_{C(t)}(x_{n}(t))$ to $n_{\ast}(t)$ for all 
$ t \in [ 0, T]$, $t\neq \bar{t}$ and of $p_{n}(t)$ to $p(t)$. The second assertion follows from Proposition \ref{4.1}.

\textbf{(2)} and \textbf{(3)}. Since $n_{\ast}(t)$ is continuous on $I_{\partial}$, there exist $n-1$ continuous unit vectors 
$v_{1}(t)$,\ldots , $v_{n-1}(t)$ such that $\R^{n}= \R n_{\ast}(t)\oplus \text{span}\, \langle v_{1}(t),\ldots ,v_{n-1}(t)\rangle $.
Fix $t\in (\bar{t}, T)$ and $\sigma >0$  such that $[ t-\sigma,t+\sigma]\subset (\bar{t}, T)$. 
Let $\varphi : [ 0, T]\rightarrow \R^{n}$ be continuous, with support contained in $[ t-\sigma,t+\sigma]$ .
Set $\varphi^{T}(t)=\varphi(t)-\langle \varphi(t),n_{\ast}(t)\rangle n_{\ast}(t)$.
By putting $\varphi^{T}(t)$ in place of $\varphi$ in \eqref{adj} we obtain 
\begin{equation}  \label{intp}
\begin{split}
-\int_{t-\sigma}^{t+\sigma}\langle \varphi^{T}(s),dp(s)\rangle &+\int_{t-\sigma}^{t+\sigma}\langle \varphi^{T}(s),n_{\ast}(s)\rangle\xi(s)\, d\mu
= \int_{t-\sigma}^{t+\sigma}\langle \varphi^{T}(s),\nabla_{x} f(x_{\ast}(s),u_{\ast}(s)) p(s)\rangle \, ds 
\\
 &\qquad\qquad\quad  -\int_{t-\sigma}^{t+\sigma}\langle \varphi^{T}(s),\eta(s) \nabla_{x}^{2}d_{C(s)}(x_{\ast}(s))p(s) \rangle\, ds.
\end{split}
\end{equation} 
Observe now that $\langle \varphi^{T},n_{\ast}(t)\rangle \equiv 0$, so that, by letting $\sigma \rightarrow 0$ 
in the above equation and using the continuity of $\varphi^{T}$, we obtain
$\langle \varphi^{T}(t),p(t+)-p(t-)\rangle =0$,
namely $\langle p(t+)-p(t-),\varphi(t)\rangle =\langle p(t+)-p(t-),\langle \varphi(t),n_{\ast}(t)\rangle n_{\ast}(t)\rangle $.
By taking subsequently $\varphi$ such that $\varphi (t)=n_{\ast}(t)$, and $\varphi(t)=v_{i}(t)$, we obtain that 
$ p(t-)-p(t+)=( p^{N}(t-)-p^{N}(t+)) n_{\ast}(t)$, namely jumps of $p$ may occur only in the direction $n_{\ast}(t)$, for all $ t \in ( \bar{t}, T)$.
By taking $t=T$ and arguing as in \eqref{intp}, but integrating over $ [ T-\sigma, T]$, one immediately obtains \eqref{ptc}.
In order to prove \eqref{ptb}, resp. \eqref{pt2}, it is enough to extend $n_{\ast}(t)$ to be constantly 
$n_{\ast}(\bar{t})$ on $[ \bar{t}-\sigma, \bar{t})$, resp. constantly $n_{\ast}(0)$ on $(0,\sigma)$, and observe that the part (1) of this Proposition
together with the fact that $p$ has bounded variation imply that $p(\bar{t}+)=p(T-)=0$.

Fix now an interval $[s,t]\subseteq (\bar{t},T)$. The regularity condition on $\partial C(t)$ allows us 
to integrate by parts on $(s,t)$ (see (34), p. 8 in \cite{MMM}), so that
\begin{equation}\label{intpart}
 \int_{s}^{t} \langle n_{\ast}(\tau),dp(\tau)\rangle +\int_{s}^{t}\langle \dot{n}_{\ast}(\tau),p(\tau)\rangle\,  
d\tau =\langle n_{\ast}(t+),p(t+)\rangle - \langle n_{\ast}(s-),p(s-)\rangle =0 ,
 \end{equation}
where both summands in the right hand side of \eqref{intpart} vanish,  
as a consequence of \textbf{(1)} and of the fact that $p$ has bounded variation, since both $s$ and $t$ belong to the interior of $I_\partial$.
In other words, the two measures $\langle n_\ast , dp\rangle$ and $\langle \dot{n}_\ast, p\rangle\, dt$ coincide in the open
interval $(\bar{t},T)$.
Therefore, for all continuous $\varphi$, with support contained in  $(\bar{t}, T)$, 
we obtain from \eqref{intp} and \eqref{intpart} that 
\[
\begin{split}
 - \int_{\bar{t}}^{T} \langle \varphi(t),dp (t)\rangle &=- \int_{\bar{t}}^T \langle \varphi (t),n_\ast(t)\rangle\, \langle n_\ast(t),dp(t)\rangle - 
 \int_{\bar{t}}^T \langle \varphi^T (t),dp (t)\rangle 
 \\
 & = \int_{\bar{t}}^{T} \langle \varphi(t), n_{\ast}(t)\rangle  \langle p(t),\dot{n}_{\ast}(t)\rangle \, dt\\
&\qquad +
  \int_{\bar{t}}^{T} \langle \varphi(t)-\langle \varphi(t),n_{\ast}(t)\rangle n_{\ast}(t),\nabla_{x} 
  f(x_{\ast}(t),u_{\ast}(t)) p(t)\rangle dt 
\\
&\qquad - \int_{\bar{t}}^{T} \langle \varphi(t)-\langle \varphi(t),n_{\ast}(t)\rangle n_{\ast}(t),\eta(t) \nabla_{x}^{2}d_{C(t)}(x_{\ast}(t))p(t) \rangle dt
 \end{split}
\]
Since $\varphi$ is arbitrary, we obtain \eqref{adjId}.

\textbf{(4)}. By multiplying \eqref{adjeps} by $\frac{p_n(s)}{\vert p_n(s)\vert}$ and integrating over 
$[ \bar{t}-\sigma, \bar{t}+\sigma]$, using the fact that $d_{C(t)}(\cdot)$ is $C^{1,1}$, uniformly with respect to $t$, 
by using the same argument of the proof of Lemma \ref{lemma1} (see \eqref{rrrprox}) we obtain
\[
\vert p_n(\bar{t}-\sigma)\vert - \vert p_n (\bar{t}+\sigma)\vert \leq 
k \int_{\bar{t}-\sigma}^{\bar{t}+\sigma}\vert p_n(s)\vert ds \leq k \sigma
\]
for a suitable constant $k$ independent of $n$.
By passing to the limit as $n \rightarrow \infty$ (along a suitable subsequence) we obtain
\[
\vert p(\bar{t}-\sigma)\vert - \vert p (\bar{t}+\sigma)\vert \leq 0 ,
\]
whence
\begin{center}
$\vert p(\bar{t}-) \vert \leq \vert p(\bar{t}+)\vert $.
\end{center}
Analogously, multiplying both sides of \eqref{relatxi} by sign $(\xi_n(t))$, integrating and using the fact 
that $\frac{\vert \xi_n\vert }{\varepsilon_n}$ is uniformly bounded in $L^{1}(0,T)$, we obtain by passing to the 
limit as $n \rightarrow \infty$ that 
$\vert p^{N}(\bar{t}-) \vert \leq \vert p^{N}(\bar{t}+)\vert = 0$. The latter vanishes,
recalling \textbf{(1)}, and thus it follows that $p^{N}(\bar{t}-) = 0 $ as well. Recalling \eqref{ptb}, $p$ is continous at $\bar{t}$.
\end{proof}
Our last task is now to the limit formulation of the maximum principle. 
Indeed, from \eqref{PMPeps} we immediately obtain, by passing to the limit for $n \rightarrow \infty $, that \eqref{PMP} holds.

\medskip

Therefore, the proof of our main result is concluded.
\section{An example}\label{Ex}
We propose a simple example, inspired by Remark 5.1 in \cite{SSER}, in order to test our necessary conditions. 
\subsection{Example 1}
The state space is $\R^{2}\ni (x,y)$, the constraint $C(t)$ is constant and equals $C:= \lbrace (x,y) : y\geq 0\rbrace$, the upper half plane.

We wish to minimize $x(1)+y(1)$ subject to
\begin{equation}\label{(E1)}
\begin{cases}
\big(\dot{x}(t),\dot{y}(t)\big)&\in \,  -N_{C}\big(x(t),y(t)\big) + \big(u^{x}(t),u^{y}(t)\big)
\\
\big(x(0),y(0)\big)& =\big(0,y_{0}\big),\qquad y_{0}\geq 0,
\end{cases}
\end{equation}
where the controls $\big(u^{x}(t),u^{y}(t)\big)$ belong to $[ -1, 1]\times [ -1, -1/2]=:U$.

By inspecting the level sets of the cost $h\big((x,y)\big)=x+y$, it is natural looking for an optimal solution such 
that both $u^x$ and $u^y$ are nonpositive.
If we restrict ourselves to the case where $u^y(t)<0$ for a.e. $t$, then this problem satisfies all our assumptions; 
in particular we are in the case \eqref{m1}.

Observe first that if $y_{0}\geq 1$, the constraint $C$ does not play any role, and the optimality of the control $(-1,-1)$ 
is straightforward. If instead $0\leq y_{0}< 1$, then our analysis becomes relevant.
Since we are in the case \eqref{m1}, there exists at most one $\bar{t}$ such that the optimal solution hits the boundary of 
$C$ and after $\bar{t}$ it remains on $\partial C$. The external unit normal $n_{\ast}(t)$ is identically $(0,-1)$ and on 
$\partial C$, namely for $x=0$, we have $\nabla_{x}^{2}d_C\big((0,y)\big)\equiv 0$.
Thanks to Propositions \eqref{condp} and \eqref{adj0} we obtain, for the optimal trajectory $(x_{\ast},y_{\ast})$ 
corresponding to the optimal control $(u_{\ast}^{x},u_{\ast}^{y})$ and the adjoint vector $(p^{x},p^{y})$, that $(p^x,p^y)$
is absolutely continuous on $(0,1)$,
$\dot{p}^{x}=0$, $\dot{p}^{y}=0$ a.e. on $[ 0, T]$, $p^{x}(1)=p^{y}(1)=-1$,
$p^{x}$ is continuous at $t=1$ and $p^{y}(1-)+1=1,$ namely $p^{y}(1-)=0$.
Thus the adjoint vector $(p^{x},p^{y})$ is :
\begin{alignat*}{2}
p^{x}(t)&=-1& \quad&\text{for all }t\in [ 0, 1]\\
p^{y}(t)&=0 &\quad&\text{for all  }t\in [ 0, 1)\\
p^{y}(1)&=-1&&\\
\mu &=-\delta_1.
\end{alignat*}
The maximum condition reads as 
\[
\begin{split}
   \langle (-1,-1),(u_{\ast}^{x},u_{\ast}^{y})\rangle &= \max_{\vert u_{1}\vert\leq 1 , -1\leq u_{2}\leq -1/2}\langle (-1,-1),(u_{1},u_{2})\rangle 
\quad \text{for} \ t=1\\
   \langle (-1,0),(u_{\ast}^{x},u_{\ast}^{y})\rangle &= \max_{\vert u_{1}\vert\leq 1 , -1\leq u_{2}\leq -1/2}\langle (-1,0),(u_{1},u_{2})\rangle 
\quad \text{for} \ 0\leq t<1,
\end{split}
\]
which gives $u_{\ast}^{x}=-1$, while no information is available for $u_{\ast}^{y}(t)$.
If we assume that  $u_{\ast}^{y}$ is continuous at $t=1$, then the transversality condition yields  and $u_{\ast}^{y}(1)=-1$. If we
assume that $u_\ast^y$ is constant,
then an expected optimal control $u_{\ast}^{y}= -1$ is found. Of course all other optimal
controls $u^y_\ast $, namely $u^y_\ast (t) =-1$ for $0\le t <\bar{t}$ and $u^y_\ast (t) \le 0$ for $ \bar{t} < t < 1$ satisfy our necessary conditions.
\begin{rem}
1) The vanishing of $p^y$ on the interval $[\bar{t},1]$ is somehow to be expected, since all controls $u^y\le -1/2$ (actually $u^y\le 0$)
in that time interval are optimal.
The vanishing of $p^y$ on $[0,\bar{t}]$ instead makes a remarkable difference with the classical case 
(i.e., $C=\R^n$), where $p^y\equiv -1$ allows
to fully determine the optimal control. It should be natural finding an adjoint vector which gives the same information as in the classical
case in an interval where the optimal solution lies in the interior of $C$, but this feature does not follow from the method developed here.\\
2) In order to have the assumption $(M_1)$ be satisfied, we had to impose that the control $u^y$ was negative and 
bounded away from zero. However,
all arguments of Section \ref{convergence} go through also in the case where $u^y$ belongs to the interval $[-1,1]$. In fact, the optimal control
$u_n^y$ for the approximate problem is always $-1$, so that the approximate solution $(x_n,y_n)$ 
touches the boundary of $C$ only at one time.
\end{rem}
\section{Conclusions}
Given a smooth moving set $C(\cdot)$ and smooth maps $f$ and $h$, we have proved necessary optimality conditions for global minimizers
of the problem $(P)$, provided the strong inward/outward pointing conditions $(M_1)$ or $(M_2)$ are satisfied. 
Such conditions were imposed in order
to deal with the discontinuity of the gradient of the distance to $C(t)$ at boundary points and actually transform the space discontinuity
into a time discontinuity. A similar idea appears in \cite{BBT}. 
An alternative approach is adopting the method developed
in \cite{BPK}, which makes use of a smooth approximation of the distance. This approach, however, requires the uniform strict convexity of
the moving set. 

If $C(t)$ is a moving smooth manifold without boundary, in particular has empty interior, then the squared distance $d_{C(t)}^2(\cdot)$
is of class $\mathcal{C}^2$ in a whole neighborhood of $C(t)$. In this case, then, all arguments of Section \ref{convergence} go through
as well. In particular, the proof of Lemma \ref{lemma1} does not require $(M_1)$ or $(M_2)$. Our main results can be rephrased in this
context, but for the sake of brevity we do not deal with such details.


\begin{thebibliography}{99}
\bibitem{ANT} S. Adly, F. Nacry, and L. Tibault, \textit{Preservation of prox-regularity of sets with application to constrained optimization}, 
SIAM J. Optim. 26 (2016), 448-473.
\bibitem{AS} L. Ambrosio, H.M. Soner, \textit{Level set approach to mean curvature flow in arbitrary codimension},
J. Differential Geom. 43 (1996), 693-737.
\bibitem{AuCe} J.-P. Aubin and  A. Cellina, \textit{Differential Inclusions. Set-valued Maps and Viability Theory} Springer-Verlag, (1984).
\bibitem{BBT} G. Barles, A. Briani, and E. Tr\'elat, \textit{Value Function and Optimal Trajectories for Regional Control 
Problems via Dynamic Programming and Pontryagin Maximum Principles}, \url{http://arxiv.org/abs/1605.04079}.
\bibitem{ABBP} A. Bressan and B. Piccoli, \textit{Introduction to the Mathematical Theory of Control}, AIMS (2007).
\bibitem{BPK} M. Brokate and P. Krej\v{c}\'{\i}, \textit{Optimal control of ODE systems involving a rate independent variational inequality}, 
Discrete and continuous dynamical systems series B. Volume 18 (2013), 331-348.
\bibitem{CS} P. Cannarsa and C. Sinestrari, {\it Semiconcave Functions, Hamilton-Jacobi Equations, and Optimal Control}, 
Birkh\"auser, Boston, 2004.
\bibitem{caom1} Tan H. Cao and B.Sh. Mordukhovich, {\it Optimal control of a perturbed sweeping process via discrete approximations}, 
to appear in Disc. Cont. Dyn. Syst., Ser B, \url{http://arxiv.org/abs/1511.08922}.
\bibitem{caom2} Tan H. Cao and B.Sh. Mordukhovich, Optimality conditions for a controlled sweeping process with applications 
to the crowd motion model, to appear in Disc. Cont. Dyn. Syst., Ser B, \url{http://arxiv.org/abs/1511.08923}.
\bibitem{CLSW} F.H. Clarke, Y.S. Ledyaev, R.J. Stern and P.R. Wolenski, {\it Nonsmooth Analysis and Control Theory},
Graduate Texts in Mathematics Vol. 178, Springer Verlag, New York, (1998).
\bibitem{CHHM} G. Colombo, R. Henrion, N.D. Hoang and B.Sh. Mordukhovich, \textit{ Optimal control of the sweeping process}, 
Dyn. Contin. Discrete Impuls. Syst. Ser. B Appl. Algorithms 19 (2012), No. 1-2, 117-159.
\bibitem{CHHM1} G. Colombo, R. Henrion, N.D. Hoang and B.Sh. Mordukhovich, 
\textit{Discrete approximations of a controlled sweeping process},  Set-Valued and Variational Analysis  23 (2015), 69-86.
\bibitem{CHHM2} G. Colombo, R. Henrion, N.D. Hoang and B.Sh. Mordukhovich, \textit{Optimal control of the sweeping 
process over polyhedral controlled sets},  J. Differential Equations 260 (2016), 3397-3447.
\bibitem{GCMP} G. Colombo and M. Palladino, \textit{The minimum time function for the controlled Moreau's sweeping process}, SIAM J. Control, in print.
\bibitem{CT} G. Colombo and L. Thibault, \textit{Prox-regular sets and applications}, in  
\textit{Handbook of nonconvex analysis and applications,} 99-182, Int. Press (2010).
\bibitem{MMM} Manuel D.P. Monteiro Marques, \textit{Differential inclusions in nonsmooth mechanical problems. Shocks and dry friction.}
Birkh\"auser, Basel, 1993.
\bibitem{rindl1} F. Rindler, \textit{Optimal control for nonconvex rate-independent evolution processes}, SIAM J. Control Optim. 47 (2008),  2773-2794. 
\bibitem{rindl2} F. Rindler, \textit{Approximation of rate-independent optimal control problems}, SIAM J. Numer. Anal. 47 (2009), 3884-3909. 
\bibitem{ML} M. Sene, L. Thibault, \textit{Regularization of dynamical systems associated with prox-regular moving sets}, 
Journal of Nonlinear and Convex Analysis 15 (2014), 647-663.
\bibitem{serea1} O.S. Serea, \textit{On reflecting boundary problem for optimal control}, SIAM J. Control Optim. 42 (2003), 559-575.
\bibitem{SSER} O.S. Serea \textit{Optimality conditions for reflecting boundary control problems}, 
Nonlinear differ. Equ. Appl.20 (2013), 1225-1242. 
\bibitem{suss} H.J. Sussmann, \textit{A Pontryagin Maximum Principle for systems of flows}, in Recent Advances in Learning and Control,
Vincent D. Blondel, Stephen P. Boyd, and Hidenori Kimura (Eds.), Lecture Notes in Control and Information Sciences 371, 
Springer-Verlag, London (2008).
\bibitem{thib} L. Thibault, \textit{Sweeping process with regular and nonregular sets}, J. Differential Equations 193 (2003), 1-26.
\bibitem{tol} A.A. Tolstonogov, \textit{Control sweeping processes}, J. Convex Anal. 23 (2016), n. 4. 
\bibitem{vinter} R.B. Vinter, {\it Optimal Control}, Birkh\"auser, Boston (2000).
\end{thebibliography}
\end{document}